\newcommand{\bbC}{\mathbb{C}}
\newcommand{\bbN}{\mathbb{N}}
\newcommand{\bbR}{\mathbb{R}}
\newcommand{\bbT}{\mathbb{T}}
\newcommand{\bbZ}{\mathbb{Z}}
\newcommand{\calA}{\mathcal{A}}
\newcommand{\calK}{\mathcal{K}}
\newcommand{\calL}{\mathcal{L}}
\newcommand{\calS}{\mathcal{S}}
\newcommand{\calU}{\mathcal{U}}
\DeclareMathOperator{\id}{\mathit I}
\DeclareMathOperator{\re}{Re}
\DeclareMathOperator{\im}{Im}
\DeclareMathOperator{\dist}{dist}
\newcommand{\distPos}{\operatorname{d}_+}
\DeclareMathOperator{\linSpan}{span}
\newcommand{\phdot}{\mathord{\,\cdot\,}}
\DeclareMathOperator{\spr}{r}
\newcommand{\spec}{\sigma}
\newcommand{\resSet}{\rho}
\newcommand{\Res}{\mathcal{R}}
\newcommand{\per}{{\operatorname{per}}}
\newcommand{\pnt}{{\operatorname{pnt}}}
\newcommand{\appr}{{\operatorname{appr}}}
\newcommand{\ess}{{\operatorname{ess}}}
\newcommand{\Cont}{\mathcal{C}}
\newcommand{\Lp}{L}
\theoremstyle{definition}
\newtheorem{definition}{Definition}[section]
\newtheorem{remark}[definition]{Remark}
\newtheorem{remarks}[definition]{Remarks}
\newtheorem{example}[definition]{Example}
\newtheorem{examples}[definition]{Examples}
\theoremstyle{plain}
\newtheorem{proposition}[definition]{Proposition}
\newtheorem{lemma}[definition]{Lemma}
\newtheorem{theorem}[definition]{Theorem}
\newtheorem{corollary}[definition]{Corollary}
\newtheorem*{theorem_no_number}{Theorem}
\numberwithin{equation}{section}
\begin{document}

\title[The Spectrum of Eventually Positive Operators]{Towards a Perron--Frobenius theory for Eventually Positive Operators}

\author{Jochen Gl\"uck}
\email{jochen.glueck@uni-ulm.de}
\address{Jochen Gl\"uck, Institute of Applied Analysis, Ulm University, 89069 Ulm, Germany}
\keywords{Eventual positivity; asymptotic positivity; Perron--Frobenius theory; Kre\u{\i}n--Rutman theorem; spectral radius; positive eigenvectors; peripheral spectrum}
\subjclass[2010]{47B65; 47A10}

\date{\today}

\begin{abstract}
	This article is a contribution to the spectral theory of so-called eventually positive operators, i.e.\ operators $T$ which may not be positive but whose powers $T^n$ become positive for large enough $n$. While the spectral theory of such operators is well understood in finite dimensions, the infinite dimensional case has received much less attention in the literature.
	
	We show that several sensible notions of ``eventual positivity'' can be defined in the infinite dimensional setting, and in contrast to the finite dimensional case those notions do not in general coincide. We then prove a variety of typical Perron--Frobenius type results: we show that the spectral radius of an eventually positive operator is contained in the spectrum; we give sufficient conditions for the spectral radius to be an eigenvalue admitting a positive eigenvector; and we show that the peripheral spectrum of an eventually positive operator is a cyclic set under quite general assumptions. All our results are formulated for operators on Banach lattices, and many of them do not impose any compactness assumptions on the operator.
\end{abstract}

\maketitle

\section{Introduction}

The classical theorems of Perron and Frobenius about the spectrum of positive matrices, which were published in \cite{Perron1907, Perron1907a} and \cite{Frobenius1908, Frobenius1909, Frobenius1912}, have had a profound impact on mathematical analysis for more than a century now. Given the intrinsic elegance of these theorems as well as their numerous applications (for an overview of several applications, see for instance the survey article \cite{MacCluer2000}) it is no surprise that many attempts were made to generalise those theorems in various respects. Two of those generalisations serve as a motivation for the present paper:

First, it is possible to prove Perron--Frobenius like theorems on infinite dimensional spaces; here, one replaces positive matrices with positive operators defined on a Banach space that is endowed with some kind of ordering. After pioneering work of Kre{\u\i}n and Rutman on rather general ordered Banach spaces \cite{Krein1948} it was noticed later on that positive operators on \emph{Banach lattices} have a particularly rich spectral theory. For an overview of this theory we refer for example to \cite[Section~V.4 and~V.5]{Schaefer1974}, \cite[Chapter~4]{Meyer-Nieberg1991} and \cite{Grobler1995}. Second, one can prove that Perron--Frobenius theorems hold in fact for a much wider class of matrices than only for positive ones. In particular, it is possible to show Perron--Frobenius type results for matrices whose powers become positive for all sufficiently large exponents. Those matrices are usually called \emph{eventually positive}; for the last two decades a very extensive study of them was performed by many researchers (see below for references). 

It might come as surprise that until very recently little effort was made to combine those two approaches, i.e.\ to consider eventually positive operators in infinite dimensions. A step into this direction was made in two recent papers by Daners, Kennedy and the author \cite{Daners2016, Daners2016a} where eventually positive $C_0$-semigroups on infinite dimensional Banach lattices were considered. Spectral theory and Perron--Frobenius type results played an essential role there. Yet, such results were proved under rather strong a priori assumptions on the spectrum in order to obtain \emph{characterisations} of eventual positivity which are well-suited for applications to partial differential equations. In the present paper we go into a different direction: we perform an analysis of the spectral properties of eventually positive operators under very general assumptions. In particular, we do not require much a priori information about the spectrum nor do we need the operators (or their powers) to be compact. 

\subsection*{Contributions of this article} We define several notions of eventual positivity for an operator and we demonstrate by a couple of examples that those notions are not equivalent in infinite dimensions (Section~\ref{section:basic-notions-1-eventual-positivity}). We also define a related but more general notion which we call \emph{asymptotic positivity} in Section~\ref{section:basic-notions-2-asymptotic-positivity}. Again, we give several versions of this notion and show that they are not equivalent in general. The rest of the article is devoted to a thorough spectral analysis of such operators. Motivated by the classical theorems of Perron and Frobenius we discuss three different themes: (i) the question whether the spectral radius of such an operator is contained in the spectrum, (ii) sufficient conditions for the existence of a positive eigenvector for the spectral radius and (iii) symmetry properties of the so-called \emph{peripheral spectrum}, i.e.\ the set of all spectral values of maximal modulus.

Question~(i) is treated in Sections~\ref{section:the-spectral-radius-1} and~\ref{section:the-spectral-radius-2}. Among others we prove the following result (see Corollary~\ref{cor:weakly-ev-pos-implies-spec-in-spec}):

\begin{theorem_no_number}
	Let $T$ be a continuous linear operator on a complex Banach lattice $E$. Suppose that, for all $0 \le x \in E$ and all $0 \le x' \in E'$, there exists an $n_0 \in \bbN$ such that $\langle x', T^n x \rangle \ge 0$ for all $n \ge n_0$. Then the spectral radius of $T$ is contained in the spectrum of $T$.
\end{theorem_no_number}

Sufficient conditions for the spectral radius to be an eigenvalue and to admit a positive eigenvector are given in Section~\ref{section:positive-eigenvectors}. Finally, we deal with symmetry properties of the peripheral (point) spectrum in Sections~\ref{section:the-peripheral-spectrum} and~\ref{section:the-periphera-point-spectrum}; more precisely, we give sufficient conditions for this set to be \emph{cyclic}. Recall that a subset $S$ of the complex numbers is called cyclic if $re^{i\theta} \in S$ (where $r \in [0,\infty)$ and $\theta \in \bbR$) implies that $re^{in\theta} \in S$ for all integers $n \in \bbZ$. One of our results now reads as follows:

\begin{theorem_no_number}
	Let $T$ be a continuous linear operator on a complex Banach lattice, with spectral radius $\spr(T) > 0$. If $T/\spr(T)$ is power bounded and if $T^n \ge 0$ for all sufficiently large $n$, then the peripheral spectrum of $T$ is cyclic.
\end{theorem_no_number}

This result is a simple special case of Theorem~\ref{thm:unif-asympt-pos-adjoint-with-bdd-error-cyclcic-per-spec} below.

\subsection*{On the history of eventual positivity} The literature on eventually positive matrices is vast and we cannot give a complete list of references here. Yet, we want to briefly mention some contributions to the theory. Matrices which have at least \emph{one} positive power were already considered in \cite{Brauer1961}, and \cite[pp 48--54]{Seneta1981} deals with matrices for which some polynomial is positive. Another early paper where eventual positivity occurred is \cite{Friedland1978} where the phenomenon was considered in the context of inverse spectral problems; see also \cite{Zaslavsky1999} for a further paper related to inverse spectral problems. Eventually positive matrices were employed in the study of Perron--Frobenius type properties for matrices with some negative entries in numerous papers such as \cite{Tarazaga2001, Johnson2004, Noutsos2006, Elhashash2008, Elhashash2009}. We refer to \cite{Hogben2009} for an algorithm to determine whether a given matrix is eventually positive and to \cite{CarnochanNaqvi2002, CarnochanNaqvi2004, Hogben2015, Saha2015} for further structure results. Moreover, we also refer to the extensive literature on the relation of eventual positivity and sign patterns of a matrix, see for example \cite{Berman2009, Ellison2009, Catral2012}. We point out once again that these references are far from being complete; the reader can find more information in these articles and in the references therein.

Instead of eventually positive matrices and operators it is also worthwhile studying eventually positive $C_0$-semigroups. In the finite dimensional case such semigroups are, for instance, considered in \cite{Noutsos2008, Olesky2009, Erickson2015} and in \cite[Theorem~2.9]{Ellison2009}. Application to control theory can be found in \cite{Altafini2015, Altafini2016, Sootla2016a}. On infinite dimensional spaces eventually positive $C_0$-semigroups first occurred in the analysis a certain concrete differential operators, namely the bi-Laplace operator \cite{Ferrero2008, Gazzola2008} and the Dirichlet-to-Neumann operator \cite{Daners2014}. The development of a general theory of eventually positive $C_0$-semigroups was recently initiated in \cite{Daners2016, Daners2016a}. It is also worthwhile mentioning the related theory of eventually monotone dynamical systems, see \cite{Sootla2015, Sootla2016}.

\subsection*{Preliminaries} We denote the set of all \emph{strictly} positive integers by $\bbN := \{1,2,...\}$ and we set $\bbN_0 := \bbN \cup \{0\}$. By $\bbT := \{\lambda \in \bbC: \; |\lambda| = 1\}$ we denote the complex unit circle. We recall again that a set $S \subseteq \bbC$ is called \emph{cyclic} if $re^{i\theta} \in S$ ($r \in [0,\infty)$, $\theta \in \bbR$) implies that $re^{in\theta} \in S$ for all $n \in \bbZ$. If $S$ is a subset of a given vector space, then we denote by $\linSpan S$ the \emph{linear span} (or \emph{linear hull}) of $S$. If $S$ is a subset of a metric space $(M,d)$ and if $x \in M$, then we denote by $\dist(x,S) := \inf\{d(x,y): \; y \in S\}$ the \emph{distance} of $x$ to $S$. If $I \subseteq \bbR$ is an interval, then we call a function $\varphi: I \to \bbR$ \emph{increasing} if $\varphi(s) \le \varphi(t)$ for all $s,t \in I$ fulfilling $s \le t$. For every compact Hausdorff space we denote the space of all real- (or complex-) valued continuous functions on $K$ by $\Cont(K;\bbR)$ (respectively, by $\Cont(K;\bbC)$) and we endow this space with the usual supremum norm.

Let $E$ be a real or complex Banach space. We denote the space of all bounded linear operators on $E$ by $\calL(E)$ and the \emph{identity operator} on $E$ by $\id_E$. By $\calK(E)$ we denote the space of all compact linear operators on $E$. The \emph{dual space} of $E$ is denoted by $E'$. For $x \in E$ and $x' \in E'$ we use the common notation $\langle x', x\rangle := x'(x)$. The \emph{adjoint} of an operator $T \in \calL(E)$ is denoted by $T' \in \calL(E')$. An operator $T \in \calL(E)$ is called \emph{power bounded} if $\sup_{n \in \bbN_0} \|T^n\| < \infty$.

Let $E$ be a complex Banach space and let $T \in \calL(E)$. The \emph{spectrum}, the \emph{point spectrum} and the \emph{approximate point spectrum} of $T$ are denoted by $\spec(T)$, $\spec_\pnt(T)$ and $\spec_\appr(T)$, respectively. The number $\spr(T) := \sup \{|\lambda|: \; \lambda \in \spec(T)\}$ denotes the \emph{spectral radius} of $T$ and the two sets
\begin{align*}
	\spec_\per(T) & := \{\lambda \in \spec(T): \; |\lambda| = \spr(T)\} \\
	\text{and} \qquad \spec_{\per,\pnt}(T) & := \{\lambda \in \spec_\pnt(T): \; |\lambda| = \spr(T)\}
\end{align*}
are called the \emph{peripheral spectrum} and the \emph{peripheral point spectrum} of $T$. The number $\spr_\ess(T) := \sup\{|\lambda|: \; \lambda - T \text{ is not Fredholm}\}$ is called the \emph{essential spectral radius} of $T$; it coincides with the spectral radius of the congruence class of $T$ in the Calkin algebra $\calL(E)/\calK(E)$ (this follow from Atkinson's theorem see e.g.\ \cite[Theorem~3.3.2]{Arveson2002}). The \emph{resolvent set} of $T$ is denoted by $\resSet(T) := \bbC \setminus \spec(T)$ and for every $\lambda \in \resSet(T)$ the operator $\Res(\lambda,T) := (\lambda - T)^{-1}$ is called the \emph{resolvent} of $T$ at $\lambda$.

The reader is assumed to be familiar with the theory of real and complex Banach lattices; standard references for this theory are for instance \cite{Schaefer1974} and \cite{Meyer-Nieberg1991}. Let $E$ be a complex Banach lattice. We denote by $E_\bbR$ the underlying real Banach lattice, and by $E_+ := (E_\bbR)_+$ the positive cone. We can decompose every element $x \in E$ as $x = \re x + i\im x$, where $\re x$ and $\im x$ are uniquely determined elements of $E_\bbR$, called the \emph{real part} and the \emph{imaginary part} of $x$. We call a vector $x \in E$ \emph{positive} if $x \in E_+$ and we denote this by $x \ge 0$; moreover, we write $x > 0$ if $x \ge 0$, but $x \not= 0$. More generally, we write $g \ge f$ (respectively, $g > f$) for two elements $f,g \in E$ if $f,g \in E_\bbR$ and if $g-f \ge 0$ (respectively, if $g \ge f$ but $g \not= f$). We call an operator $T \in \calL(E)$ \emph{positive} if $TE_+ \subseteq E_+$ and we denote this by $T \ge 0$. This terminology differs from what is usually used in the theory of (eventually) positive matrices, but it is very common in the theory of Banach lattices. A linear operator $T \in \calL(E)$ is called \emph{real} if $TE_\bbR \subseteq E_\bbR$. Note that every positive operator is real. 

Let $E$ be a real or complex Banach lattice and let $u \in E_+$. The \emph{principal ideal} generated by $u$ is defined to be the set
\begin{align*}
	E_u := \{x \in E: \; \exists c \ge 0 \; |x| \le cu\}
\end{align*}
and for every $x \in E_u$ the \emph{gauge} norm of $x$ with respect to $u$ is defined as
\begin{align*}
	\|x\|_u := \inf \{c \ge 0: \; |x| \le cu\}.
\end{align*}
The gauge norm is indeed a norm on the vector space $E_u$. The space $E_u$ is a (real or complex) Banach lattice with respect to the gauge norm $\|\cdot\|_u$ and with respect to the order inherited from $E$ (or $E_\bbR$, respectively); in fact, $(E_u,\|\cdot\|_u)$ is even an AM-space with unit $u$. These results follow from the corollary to Proposition~II.7.2 in \cite{Schaefer1974}. Kakutani's representation theorem for AM-spaces thus asserts that there exists a compact Hausdorff space $K$ and an isometric Banach lattice isomorphism from $E_u$ to $\Cont(K;\bbR)$ (respectively, to $\Cont(K;\bbC)$) which maps $u$ to the constant function with value $1$ (which we denote by $\mathbbm{1}$). For the case of real scalars this theorem can, for instance, be found in \cite[Theorem~II.7.4]{Schaefer1974} or in \cite[Theorem~2.1.3]{Meyer-Nieberg1991}; the case of complex scalars is a simple consequence of the real case.

A particular role in our paper is played by the distance of a vector to the positive cone. Let $E$ be a complex Banach lattice. For every $f \in E$ we denote by $\distPos(f) := \dist(f,E_+) = \inf\{\|f-g\|: \; g \in E_+\}$ the distance of $f$ to the positive cone $E_+$. This notation applies in particular to the complex Banach lattice $\bbC$ where $\bbC_+ = \bbR_+ = [0,\infty)$. The function $E \to \bbR$, $f \mapsto \distPos(f)$ has a few simple but important properties which we are going to use tacitly throughout: for every $f \in E$ we have $\distPos(f) = 0$ if and only if $f \ge 0$; the function $\distPos(\phdot)$ is continuous (even Lipschitz continuous with Lipschitz constant $1$); we have $\distPos(\alpha f) = \alpha \distPos(f)$ for every $\alpha \in [0,\infty)$ and every $f \in E$; we have $\distPos(f+g) \le \distPos(f) + \distPos(g)$ for all $f,g \in E$; and finally, we have $\distPos(f) \le \|f\|$ for all $f \in E$.

\section{Basic Notions I: Eventual Positivity} \label{section:basic-notions-1-eventual-positivity}

In this section we make precise what me mean by an eventually positive operator. Since we are mainly interested in spectral theory in this paper, we formulate the definition on \emph{complex} Banach lattices.

\begin{definition} \label{def:evtl-pos}
	Let $E$ be a complex Banach lattice. An operator $T \in \calL(E)$ is called
	\begin{itemize}
		\item[(a)] \emph{uniformly eventually positive} if there exists an $n_0 \in \bbN$ such that $T^n \ge 0$ for all $n \ge n_0$. 
		\item[(b)] \emph{individually eventually positive} if for each $x \in E_+$ there exists an $n_0 \in \bbN$ such that $T^nx \ge 0$ for all $n \ge n_0$. 
		\item[(c)] \emph{weakly eventually positive} if for each $x \in E_+$ and each $x' \in E'_+$ there exists an $n_0 \in \bbN_0$ such that $\langle x', T^nx \rangle \ge 0$ for all $n \ge n_0$.
	\end{itemize}
\end{definition}

The definition of uniform and individual eventual positivity was motivated by the papers \cite{Daners2016a} and \cite{Daners2016} where the same terminology was introduced for $C_0$-semigroups; the notion of \emph{weak} eventual positivity seems to be new. 

Obviously, every uniformly eventually positive operator is also individually eventually positive, and every individually eventually positive operator is weakly eventually positive. Moreover, since every finite dimensional complex Banach lattice is isomorphic to $\bbC^d$ (this follows e.g.\ from \cite[Corollary~1 to Theorem~II.3.9]{Schaefer1974}), it is clear that the three notions are in fact equivalent in case that $\dim E < \infty$. Encouraged by this observation one might be tempted to suspect that a similar assertion holds for compact operator, or at least for operators of finite rank, on infinite dimensional Banach lattices. Yet, it turns out that this is not the case as the following two examples show.

\begin{examples} \label{ex:ev-pos-not-equivalent}
	(a) Let $E = \Cont([-1,1];\bbC)$ denote the space of all continuous, complex-valued functions on $[-1,1]$, endowed with the supremum norm $\|\cdot\|_\infty$. There exists an operator $T \in \calL(E)$ which has two-dimensional range and which is individually but not uniformly eventually positive.
	
	(b) Let $p \in [1,\infty)$ and set $E := \Lp^p((-1,1);\bbC)$. There exists an operator $T \in \calL(E)$ which has two-dimensional range and which is weakly but not individually eventually positive.
\end{examples}
\begin{proof}
	We use the following notation: Whenever $E$ is a Banach space, $f \in E$ and $\varphi \in E'$, then we define $f \otimes \varphi \in \calL(E)$ by $(f \otimes \varphi)g = \langle \varphi,g\rangle f$ for all $g \in E$.
	
	(a) Let $E = \Cont([-1,1];\bbC)$. Define $f_1,f_2 \in E$ be $f_1(x) = 1$ and $f_2(x) = x$ for all $x \in [-1,1]$ and define $\varphi_1,\varphi_2 \in E'$ by $\langle \varphi_1, g\rangle = \frac{1}{2} \int_{-1}^1 g(x) \, dx$ and $\langle \varphi_2, g \rangle = \frac{1}{4}[g(1) - g(-1)]$ for all $g \in E$. Note that we have
	\begin{align}
		\label{eq:dualities-for-counterexample}
		\begin{aligned}
			\langle \varphi_1, f_1 \rangle = 1, & \qquad \langle \varphi_1, f_2 \rangle = 0,\\
			\langle \varphi_2, f_1 \rangle = 0, & \qquad \langle \varphi_2, f_2 \rangle = \frac{1}{2}.
		\end{aligned}
	\end{align}
	We define the operator $T \in \calL(E)$ by $T = f_1 \otimes \varphi_1 + f_2 \otimes \varphi_2$. Obviously the range $TE$ of $T$ has dimension $2$.
	
	We have $T^n = f_1 \otimes \varphi_1 + \frac{1}{2^{n-1}} f_2 \otimes \varphi_2$ for all $n \in \bbN$. Let us show that $T$ is individually eventually positive: for every $0 < g \in E$ we have
	\begin{align*}
		T^ng = \langle \varphi_1,g \rangle f_1 + \frac{1}{2^{n-1}} \langle \varphi_2, g \rangle \to \langle \varphi_1, g \rangle f_1
	\end{align*}
	as $n \to \infty$. Since $\langle \varphi_1, g \rangle > 0$, since $f_1$ is the constant function with value $1$ and since $T^ng$ is real-valued for each $n$, it follows that $T^ng \ge 0$ for all sufficiently large $n$; hence, $T$ is individually eventually positive.
	
	Now we show that $T$ is not uniformly eventually positive. For every $\varepsilon > 0$ we choose a function $0 \le g_\varepsilon \in E$ which fulfils $\int_{-1}^1 g_\varepsilon(x) \, dx \le \varepsilon$, $g_\varepsilon(-1) = 1$ and $g_\varepsilon(1) = 0$. Then we obtain for every $n \in \bbN$ that
	\begin{align*}
		T^ng_\varepsilon = \frac{1}{2} \int_{-1}^1 g_\varepsilon(x) \, dx f_1 - \frac{1}{2^{n-1}} \frac{1}{4} f_2 \le \frac{1}{2} \varepsilon f_1 - \frac{1}{2^{n+1}} f_2
	\end{align*}
	and thus,
	\begin{align*}
		(T^ng_\varepsilon)(-1) \le \frac{1}{2} \varepsilon - \frac{1}{2^{n+1}}.
	\end{align*}
	For every $n \in \bbN$ we can therefore find an $\varepsilon > 0$ and a function $g_\varepsilon \ge 0$ such that $(T^ng_\varepsilon)(-1) < 0$. This even proves that $T^n$ is not a positive operator for any $n \in \bbN$. In particular, $T$ is not uniformly eventually positive.
	
	(b) Now, let $p \in [1,\infty)$ and $E = \Lp^p((-1,1);\bbC)$. Let $f_1,f_2 \in E$ be given by $f_1(x) = 1$ and $f_2(x) = \operatorname{sgn}x \, |x|^{-\frac{1}{2p}}$ for all $x \in (-1,1)$; note that $f_2$ is indeed contained in $\Lp^p$ due to the choice of the exponent $-\frac{1}{2p}$. Define $\varphi_1,\varphi_2 \in E'$ by $\langle \varphi_1, g \rangle = \frac{1}{2} \int_{(-1,1)} g(x) \, dx$ and $\langle \varphi_2, g\rangle = c\int_{(-1,1)} \operatorname{sgn}x \, g(x) \, dx$ for all $g \in E$, where the constant $c > 0$ is chosen such that $\langle \varphi_2, f_2 \rangle = \frac{1}{2}$. Note that $f_1,f_2$ and $\varphi_1, \varphi_2$ fulfil the equations~\eqref{eq:dualities-for-counterexample}.
	
	As above we define $T := f_1 \otimes \varphi_1 + f_2 \otimes \varphi_2$ and we thus obtain
	\begin{align*}
		T^n = f_1 \otimes \varphi_1 + \frac{1}{2^{n-1}} f_2 \otimes \varphi_2
	\end{align*}
	for all $n \in \bbN$. Let us show that $T$ is weakly eventually positive. For every $0 < g \in E$ and and every $0 < \psi \in E'$ we obtain
	\begin{align*}
		\langle \psi, T^n g\rangle = \langle \varphi_1, g\rangle \langle \psi, f_1\rangle + \frac{1}{2^{n-1}} \langle \varphi_2, g\rangle \langle \psi, f_2 \rangle \to \langle \varphi_1, g \rangle \langle \psi, f_1 \rangle > 0
	\end{align*}
	as $n \to \infty$. Hence, $\langle \psi, T^n g \rangle$ is positive for all sufficiently large $n$.
	
	Now we show that $T$ is not individually eventually positive. To this end, choose a function $0 < g \in E$ such that $\langle \varphi_2, g \rangle > 0$. We obtain for all $n \in \bbN$ that
	\begin{align*}
		T^ng = \langle \varphi_1, g\rangle f_1 + \frac{1}{2^{n-1}} \langle \varphi_2, g\rangle f_2.
	\end{align*}
	The function $f_1$ is bounded, but the function $f_2$ fulfils $\lim_{x \uparrow 0}f_2(x) = -\infty$. This proves that $T^ng \not \ge 0$ for any $n \in \bbN$. In particular, $T$ is not individually eventually positive.
\end{proof}

We note in passing that one can also construct examples of $C_0$-semigroups which are, say, individually but not uniformly eventually positive (see \cite[Examples~5.7 and~5.8]{Daners2016}); those examples are, however, a bit more involved, in particular if one wants to ensure certain compactness properties.

\section{Basic Notions II: Asymptotic Positivity} \label{section:basic-notions-2-asymptotic-positivity}

The second notion we deal with in this paper is \emph{asymptotic positivity}. For certain $C_0$-semigroups two versions of this notion were introduced in \cite{Daners2016a}. Here we define three versions of the notion for (powers of) single operators. Recall that for every element $f$ of a complex Banach lattice $E$ the symbol $\distPos(f) := \dist(f,E_+)$ denotes the distance of $f$ to the positive cone.

\begin{definition} \label{def:asymp-pos}
	Let $T$ be a bounded linear operator on a complex Banach lattice $E$. Let $r(T) > 0$ and define $S := T/\spr(T)$. We call $T$
	\begin{itemize}
		\item[(a)] \emph{uniformly asymptotically positive} $\sup_{x \in E_+, \; \|x\| \le 1} \distPos(S^n x) \to 0$ as $n \to \infty$.
		\item[(b)] \emph{individually asymptotically positive} if $\distPos(S^nx) \to 0$ as $n \to \infty$ for each $x \in E_+$.
		\item[(c)] \emph{weakly asymptotically positive} if $\distPos(\langle x', S^nx \rangle) \to 0$ as $n \to \infty$ for each $x \in E_+$ and each $x' \in E'_+$.
	\end{itemize}
\end{definition}

\begin{remarks}
	(a) The above definition is rather bold in the following sense. Consider an operator $T$ on a complex Banach lattice $E$ with $r(T) = 1$ and assume that $T$ is uniformly asymptotically positive. In case that $T$ is power bounded uniform asymptotic positivity means that the fraction
	\begin{align}
		\label{eq:fraction-for-uniform-asympt-pos}
		\frac{\sup_{x \in E_+, \; \|x\| = 1} \distPos(T^n x)}{\|T^n\|}
	\end{align}
	converges to $0$ as $n \to \infty$. If, however, $T$ is not power-bounded, then the condition that $T$ be uniformly asymptotically positive is \emph{stronger} than the condition that~\eqref{eq:fraction-for-uniform-asympt-pos} converge to $0$. 
	
	Hence, one could also suggest to call $T$ asymptotically positive if only the fraction~\eqref{eq:fraction-for-uniform-asympt-pos} converges to $0$, and it does not seem to be clear whether one should prefer this definition or the definition that we gave above (and that we use throughout the paper). This is the reason why asymptotic positivity for $C_0$-semigroups was only defined under additional boundedness assumptions in \cite{Daners2016a} (compare also \cite[Problem~(c) in Section~10]{Daners2016a}).
	
	(b) We did not assume the operator $T$ in Definition~\ref{def:asymp-pos} to be real. A simple example for a non-real but uniformly asymptotically positive operator $T$ on the complex Banach lattice $\bbC^2$ is given by
	\begin{align*}
		T = 
		\begin{pmatrix}
			1 & 0  \\
			0 & \frac{i}{2}
		\end{pmatrix}.
	\end{align*}
	None of our subsequent spectral results for asymptotically positive operators requires the operator to be real. Hence, (the finite dimensional versions of) these results contribute to the Perron--Frobenius theory of matrices with some complex entries, a topic which has already been studied by several authors and from various perspectives (see for instance \cite{Rump2003, Noutsos2012, Tudisco2015}).
\end{remarks}

The following proposition might help to get a better understanding of the distance to the positive cone which plays an important role in Definition~\ref{def:asymp-pos}.

\begin{proposition} \label{prop:formula-for-distance-to-positive-cone}
	Let $E$ be a complex Banach lattice and let $x \in E$. Then we have $\distPos(x) = \|x - (\re x)^+\| = \|-(\re x)^- + i \im x\|$.
\end{proposition}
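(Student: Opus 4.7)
The second equality is essentially a notational observation: writing $\re x = (\re x)^+ - (\re x)^-$ gives $x - (\re x)^+ = ((\re x)^+ - (\re x)^- - (\re x)^+) + i \im x = -(\re x)^- + i \im x$, so both expressions inside the norms are literally the same vector. The work is therefore concentrated in the first equality $\distPos(x) = \|x - (\re x)^+\|$.

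The upper bound $\distPos(x) \le \|x - (\re x)^+\|$ is free, since $(\re x)^+ \in E_+$ and $\distPos(x)$ is the infimum of $\|x - g\|$ over $g \in E_+$. For the reverse inequality my plan is to show the pointwise (lattice) estimate
\begin{equation*}
    |x - (\re x)^+| \le |x - g| \qquad \text{for every } g \in E_+,
\end{equation*}
since then passing to norms (monotonicity of the lattice norm) and taking the infimum over $g$ yields $\|x - (\re x)^+\| \le \distPos(x)$.

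To prove the displayed lattice inequality I would first note that both vectors have the same imaginary part $\im x$, while their real parts are $-(\re x)^-$ and $\re x - g$. The moduli of these real parts are comparable: from $-\re x \le g - \re x$ (because $g \ge 0$) one gets
\begin{equation*}
    (\re x)^- = (-\re x)^+ \le (g - \re x)^+ \le |g - \re x| = |\re x - g|,
\end{equation*}
i.e.\ $|-(\re x)^-| \le |\re x - g|$. So the task reduces to the following general fact: if $a,b,c \in E_\bbR$ with $|a| \le |b|$, then $|a + ic| \le |b + ic|$.

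This last inequality I would prove by reduction to a $\Cont(K)$-representation, invoking the Kakutani theorem recalled in the Preliminaries. Set $w := |a| + |b| + |c| \in E_+$; then $a, b, c$ all lie in the principal ideal $E_w$, which is an AM-space with unit $w$ and therefore isometrically and lattice-isomorphic (as a complex Banach lattice) to $\Cont(K;\bbC)$ for a suitable compact Hausdorff $K$, with $w$ mapped to $\mathbbm{1}$. Under this identification the modulus in $E_w$ corresponds to the pointwise complex modulus, and the desired inequality becomes $\sqrt{a(k)^2 + c(k)^2} \le \sqrt{b(k)^2 + c(k)^2}$ at each $k \in K$, which is immediate from $|a(k)| \le |b(k)|$. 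Transferring back to $E$ finishes the proof. The only place where care is required is the invocation of the complex Kakutani representation and the fact that it intertwines the complex modulus with the pointwise one; this is precisely the complex-scalar variant of Kakutani's theorem that the Preliminaries section already records, so the step is routine rather than a real obstacle.
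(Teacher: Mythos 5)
Your proof is correct and takes essentially the same route as the paper: establish the lattice inequality $|x-(\re x)^+| \le |x-g|$ for every $g \in E_+$, verify it pointwise via the Kakutani representation of a suitable principal ideal as $\Cont(K;\bbC)$, and transfer back to $E$ because moduli in an ideal agree with those in $E$. The only difference is organizational --- you first prove the real-part comparison $(\re x)^- \le |\re x - g|$ by monotonicity of $(\cdot)^+$ and then invoke the clean sub-lemma $|a| \le |b| \Rightarrow |a+ic| \le |b+ic|$, whereas the paper checks the full inequality directly in $\bbC$ and transfers it wholesale; both boil down to the same $\Cont(K)$ reduction.
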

\begin{proof}
	Let $x \in E$. The second equality in the assertion is obvious, so let us prove the first equality. We define $\hat x := x - (\re x)^+$. Of course we have $\distPos(x) \le \|\hat x\|$.
	
	In order to prove the converse inequality it suffices to show that $|\hat x| \le |x - y|$ for all $y \in E_+$. This inequality is easy to check in case that $E = \bbC$ and hence it is also true if $E$ is the space $\Cont(K;\bbC)$ of continuous complex-valued functions on a compact Hausdorff space $K$. Now, let $E$ be arbitrary and let $y \in E_+$. We set $u := |x| + y \in E_+$. The principal ideal $E_u$ contains both $x$ and $y$ and, when endowed with the gauge norm $\|\cdot\|_u$, it is a complex Banach lattice which is isometrically Banach lattice isomorphic to a $\Cont(K;\bbC)$-space. Therefore, the inequality $|\hat x| \le |x-y|$ is true in the complex Banach lattice $(E_u,\|\cdot\|_u)$. Since $E_u$ is an ideal in $E$, the complex modulus in $E_u$ coincides with the complex modulus in $E$ for every element of $E_u$. Thus we have $|\hat x| \le |x-y|$ in $E$, as claimed.
\end{proof}

Let us briefly comment on the relation between the three notions in Definition~\ref{def:asymp-pos}:

\begin{proposition} \label{prop:asymp-pos-relation-between-different-versions}
	Let $E$ be a complex Banach lattice and let $T \in \calL(E)$ be an operator with $\spr(T)$.
	\begin{itemize}
		\item[(a)] If $T$ is uniformly asymptotically positive, then $T$ is also individually asymptotically positive.
		\item[(b)] If $T$ is individually asymptotically positive, then $T$ is also weakly asymptotically positive.
	\end{itemize}
\end{proposition}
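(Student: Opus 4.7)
The plan is to set $S := T/\spr(T)$ throughout (matching the notation of Definition~\ref{def:asymp-pos}) and exploit two properties of the distance function $\distPos$ that were already recorded in the preliminaries: positive homogeneity ($\distPos(\alpha \xi) = \alpha \distPos(\xi)$ for $\alpha \ge 0$) and the trivial bound $\distPos(\xi) \le \|\xi - y\|$ for every $y \in E_+$. Both statements are then essentially one-line reductions.

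For (a), given an arbitrary $x \in E_+$, the case $x = 0$ is trivial since $S^n 0 = 0 \in E_+$. If $x \neq 0$, set $\tilde x := x/\|x\|$, which lies in $E_+$ and has norm one. By uniform asymptotic positivity,
\begin{equation*}
  \distPos(S^n \tilde x) \le \sup_{y \in E_+,\, \|y\| \le 1} \distPos(S^n y) \xrightarrow{n \to \infty} 0,
\end{equation*}
and multiplying by $\|x\|$ and invoking positive homogeneity of $\distPos$ gives $\distPos(S^n x) \to 0$.

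For (b), fix $x \in E_+$ and $x' \in E'_+$. For any $y \in E_+$ the scalar $\langle x', y\rangle$ is a non-negative real number, so it is an admissible competitor in the infimum defining $\distPos$ on $\bbC$; hence
\begin{equation*}
  \distPos\bigl(\langle x', S^n x\rangle\bigr) \le \bigl|\langle x', S^n x\rangle - \langle x', y\rangle\bigr| \le \|x'\| \cdot \|S^n x - y\|.
\end{equation*}
Taking the infimum over $y \in E_+$ on the right yields $\distPos(\langle x', S^n x\rangle) \le \|x'\|\, \distPos(S^n x)$, which tends to $0$ by individual asymptotic positivity.

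There is no real obstacle in this proof; the only subtle point worth stating explicitly is the observation in (b) that for $z \in \bbC$ the quantity $\distPos(z)$ is the distance to $[0,\infty) = \bbC_+$, so every $\langle x', y\rangle$ with $y \in E_+$, $x' \in E'_+$ genuinely is a feasible point for that infimum. This is what allows us to pass cleanly from norm-distance in $E$ to distance to the positive half-line in $\bbC$.
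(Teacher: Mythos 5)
Your proof is correct and follows essentially the same route as the paper: part~(a) is handled by normalizing $x$ and using positive homogeneity of $\distPos$ (which the paper dismisses as obvious), and part~(b) hinges on exactly the estimate $\distPos(\langle x', S^n x\rangle) \le \|x'\|\,\distPos(S^n x)$, obtained by taking the infimum over $y \in E_+$ — this is the same inequality the paper records in its proof.
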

\begin{proof}
	The implication in~(a) is obvious and the implication in~(b) follows from the estimate
	\begin{align*}
		\distPos(\langle x', y\rangle) \le \inf_{x \in E_+} |\langle x', y \rangle - \langle x',x \rangle| \le \|x'\| \distPos(y),
	\end{align*}
	which is true for all vectors $y \in E$ and for all functionals $0 \le x' \in E'$.
\end{proof}

As in Section~\ref{section:basic-notions-1-eventual-positivity}, the converse implications are not in general true. We demonstrate this by two simple examples:

\begin{examples} \label{ex:asymp-pos-not-equivalent}
	Let $p \in [1,\infty)$ and let $E := \ell^p(\bbN;\bbC)$.
	
	(a) There exists an operator $T \in \calL(E)$ with spectral radius $\spr(T) = 1$ which has the following properties: $T$ is not uniformly asymptotically positive, but $T^n$ converges strongly to $0$ as $n \to \infty$ and thus $T$ is individually asymptotically positive (though for trivial reasons).
	
	(b) There exists an operator $T \in \calL(E)$ with spectral radius $\spr(T) = 1$ which has the following properties: $T$ is not individually asymptotically positive, but $T^n$ converges weakly to $0$ as $n \to \infty$ and thus $T$ is weakly asymptotically positive (though for trivial reasons). 
\end{examples}
\begin{proof}
	(a) If we define $T$ to be the multiplication operator with symbol $(-1+\frac{1}{n})_{n \in \bbN}$, then $T$ clearly has all the claimed properties.
	
	(b) Let $T$ be $-1$ times the right shift operator on $E$; then $T$ has all the properties that we claimed.
\end{proof}

On the other hand weak, individual, and even uniform eventual positivity coincide under sufficiently strong compactness assumptions as we prove in the following proposition. 

\begin{proposition} \label{prop:regularity-implies-better-asymp-pos}
	Let $E$ be a complex Banach lattice and let $T \in \calL(E)$ be an operator with $\spr(T) > 0$ which is weakly asymptotically positive. Denote by $\calS := \{(T/\spr(T))^n: \; n \in \bbN_0\}$ the semigroup in $\calL(E)$ generated by the rescaled operator $T/\spr(T)$.
	\begin{itemize}
		\item[(a)] If $\calS$ is relatively compact in $\calL(E)$ with respect to the strong operator topology, then $T$ is individually asymptotically positive.
		\item[(b)] If $\calS$ is relatively compact in $\calL(E)$ with respect to the operator norm topology, then $T$ is uniformly asymptotically positive.
	\end{itemize}
\end{proposition}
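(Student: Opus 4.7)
The plan is to argue by contradiction in both parts: use the compactness hypothesis to extract a convergent subsequence (of vectors in (a), of operators in (b)), then use weak asymptotic positivity to show the limit is positive, thereby contradicting the assumed failure of the stronger asymptotic positivity. Throughout, set $S := T/\spr(T)$; weak asymptotic positivity then reads $\distPos(\langle x', S^n x\rangle) \to 0$ for all $x \in E_+$ and $0 \le x' \in E'$.

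For part (a), suppose $T$ is not individually asymptotically positive. Then there exist $x \in E_+$, $\varepsilon > 0$, and indices $n_1 < n_2 < \dots$ with $\distPos(S^{n_k} x) \ge \varepsilon$ for every $k$. SOT-relative compactness of $\calS$ implies that the orbit $\{S^n x : n \in \bbN_0\} \subseteq E$ is relatively compact in norm (the evaluation map $\calL(E) \to E$, $R \mapsto Rx$, is SOT-to-norm continuous), so a subsequence satisfies $S^{n_{k_j}} x \to y$ in norm. For every $0 \le x' \in E'$, continuity of the scalar distance together with weak asymptotic positivity gives $\distPos(\langle x',y\rangle) = 0$, i.e.\ $\langle x', y\rangle \in [0,\infty)$. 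Since $E'_+$ separates $E_\bbR$ and characterises the positive cone (standard Banach-lattice duality), this forces $y \in E_+$, so $\distPos(y) = 0$. But continuity of $\distPos$ on $E$ also gives $\distPos(y) = \lim_j \distPos(S^{n_{k_j}} x) \ge \varepsilon$, a contradiction.

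For part (b), assume $T$ is not uniformly asymptotically positive; choose $\varepsilon > 0$, indices $n_k \to \infty$, and positive unit vectors $x_k \in E_+$ with $\distPos(S^{n_k} x_k) \ge \varepsilon$. By norm-compactness of $\calS$, pass to a subsequence (relabelled) so that $S^{n_k} \to R$ in $\calL(E)$. Repeating the argument of (a) with an arbitrary fixed $x \in E_+$ in place of the $x_k$'s shows that $Rx \in E_+$ for every $x \in E_+$, i.e.\ $R$ is a positive operator. Hence $\distPos(Rx_k) = 0$, and
\begin{align*}
    \distPos(S^{n_k} x_k) \le \|S^{n_k} x_k - Rx_k\| \le \|S^{n_k} - R\| \, \|x_k\| \le \|S^{n_k} - R\|,
\end{align*}
which tends to $0$ as $k \to \infty$, contradicting the choice $\distPos(S^{n_k} x_k) \ge \varepsilon$.

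The only step that is not pure bookkeeping is the implication ``$\langle x',y\rangle \ge 0$ for every $0 \le x' \in E'$ implies $y \in E_+$''; this relies on the standard Banach-lattice fact that $E_+$ is cut out by its dual cone $E'_+$, with the real/imaginary decomposition handling the complex case once one knows the result for real scalars. Everything else---extracting subsequences from relatively compact sets in the metric topologies of $E$ and $\calL(E)$, continuity of $\distPos$, the triangle inequality, and continuity of operator evaluation---is routine.
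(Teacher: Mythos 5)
Your proof is correct and follows essentially the same route as the paper's: extract a norm-convergent subsequence from the relatively compact orbit (in~(a)) or operator sequence (in~(b)), use weak asymptotic positivity tested against functionals in $E'_+$ to show the limit is positive, and conclude by a norm estimate. The only cosmetic differences are that you phrase both parts as contradictions (the paper in~(a) uses the subsequence criterion for convergence to zero directly, and in~(b) first records $\dist(T^n,\calL(E)_+)\to 0$ before the final uniform bound), and you spell out the duality step ``$\langle x',y\rangle\ge 0$ for all $x'\in E'_+$ implies $y\in E_+$'' that the paper leaves to the reader.
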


In the situation of the above proposition, define $S := T/\spr(T)$. If the set $\calS$ is relatively compact with respect to the strong operator topology, then $S$ is usually said to have \emph{relatively compact orbits} or to be \emph{almost periodic} in the literature. It is well-known that $\calS = \{S^n: \; n \in \bbN_0\}$ is relatively compact in $\calL(E)$ with respect to the strong operator topology if and only if, for every $f \in E$, the set $\{S^nf: \; n \in \bbN_0\}$ is relatively compact in $E$ with respect to the norm topology; see e.g.~\cite[Corollary~A.5]{Engel2000}.

The condition in assertion~(b) that $\calS$ be relatively compact with respect to the operator norm topology is for instance fulfilled if $S$ is power-bounded and some power of $T$ is compact.

\begin{proof}[Proof of Proposition~\ref{prop:regularity-implies-better-asymp-pos}]
	We may assume throughout the proof that $\spr(T) = 1$.
	
	(a) Let $f \in E_+$. We have to show that $\distPos(T^nf)$ converges to $0$ as $n \to \infty$, and to this end it suffices to prove that every subsequence $(\distPos(T^{n_k}f))_{k \in \bbN_0}$ of $(\distPos(T^nf))_{n \in \bbN_0}$ has itself a subsequence which converges to $0$. Since the set $\{T^{n_k}f: \; k \in \bbN_0\}$ is relatively compact in $E$, the sequence $(T^{n_k}f)_{k \in \bbN_0}$ has a subsequence $(T^{n_{k_j}})_{j \in \bbN_0}$ which converges to a vector $g \in E$. Since $T$ is weakly asymptotically positive, one readily obtains that $g \in E_+$. Hence,
	\begin{align*}
		\distPos(T^{n_{k_j}}f) \le \|T^{n_{k_j}} - g\| \to 0,
	\end{align*}
	which proves that claim.
	
	(b) Let $\calL(E)_+$ denote the set of all positive operators in $\calL(E)$. If the assumption of~(b) is fulfilled, then we can show by the same arguments as in~(a) that $\dist(T^n,\calL(E)_+) \to 0$ as $n \to \infty$. Now, let $\varepsilon > 0$ and choose $n_0 \ge n$ such that $\dist(T^n,\calL(E)_+) < \varepsilon$ for each $n \ge n_0$. Then, for every $n \ge n_0$, we can find an operator $R_n \ge 0$ such that $\|T^n-R_n\| < \varepsilon$. In particular we obtain for every $f \in E_+$ with $\|f\| \le 1$ and every $n \ge n_0$ that
	\begin{align*}
		\distPos(T^nf) \le \|T^nf - R_nf\| + \distPos(R_nf) \le \|T^n-R_n\| < \varepsilon.
	\end{align*}
	Thus,
	\begin{align*}
		\sup_{f \in E_+, \; \|f\| \le 1} \distPos(T^nf) \le \varepsilon
	\end{align*}
	for all $n \ge n_0$, which proves that $T$ is indeed uniformly asymptotically positive.
\end{proof}

\section{The Spectral Radius I} \label{section:the-spectral-radius-1}

Let $E$ be a complex Banach lattice and let $T \in \calL(E)$. If $T$ is positive, then it is well-known that $\spr(T) \in \spec(T)$, see e.g.\ \cite[Proposition~V.4.1]{Schaefer1974}. In this section we prove that the same is still true for uniformly asymptotically positive operators. Individually asymptotically positive operator, for which the situation is more subtle, and individually eventually positive operators are treated in the next section. Our main result in this section is as follows:

\begin{theorem} \label{thm:spec-rad-in-spec-for-unif-asymp-pos-op}
	Let $E$ be a complex Banach lattice and let $T \in \calL(E)$ an operator with $\spr(T) > 0$. If $T$ is uniformly asymptotically positive, then $r(T) \in \sigma(T)$.
\end{theorem}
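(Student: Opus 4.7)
The approach is to adapt the classical Pringsheim-type proof of $\spr(T) \in \spec(T)$ for positive operators to the setting of uniform asymptotic positivity. After rescaling I may assume $\spr(T) = 1$ and suppose, toward a contradiction, that $1 \notin \spec(T)$. Then the resolvent $\lambda \mapsto \Res(\lambda, T)$ extends holomorphically into a neighborhood of $\lambda = 1$; equivalently, the operator-valued function $g(z) := (\id_E - zT)^{-1} = \sum_{n=0}^\infty z^n T^n$, which converges in operator norm for $|z| < 1$, admits a holomorphic extension to a disk around $z = 1$. I would then choose $z_0 \in (0,1)$ sufficiently close to $1$ and $z_1 > 1$ such that the Taylor expansion of $g$ around $z_0$, whose $k$-th coefficient equals $T^k(\id_E - z_0 T)^{-(k+1)}$, converges in operator norm at $z_1$.

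For a genuinely positive $T$ the classical argument expands $(\id_E - z_0 T)^{-(k+1)} = \sum_m \binom{m+k}{k} z_0^m T^m$ inside each Taylor coefficient and rearranges the resulting double sum to produce the operator inequality $g(z_1) \ge \sum_{n=0}^N z_1^n T^n$ for every $N$. Monotonicity of the Banach lattice norm then gives $\|g(z_1)\| \ge z_1^N \|T^N\|$, which contradicts boundedness of $g(z_1)$ because $\|T^N\|^{1/N} \to \spr(T) = 1 < z_1$. To adapt this under only uniform asymptotic positivity I would decompose, for each $x \in E_+$ with $\|x\| \le 1$, $T^n x = y_{n,x} + r_{n,x}$ with $y_{n,x} \in E_+$ and $\|r_{n,x}\| \le \varepsilon_n \|x\|$, where $\varepsilon_n := \sup_{x \in E_+,\, \|x\|\le 1} \distPos(T^n x) \to 0$, apply the rearrangement to $x$, and split the resulting series into a positive part built from the $y_{n,x}$ and an error part built from the $r_{n,x}$. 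The positive part dominates $z_1^N y_{N,x}$ in lattice order for every $N$, so its norm is at least $z_1^N(\|T^N x\| - \varepsilon_N \|x\|)$; comparing with $\|g(z_1) x\|$ and choosing $x \in E_+$ on which $\|T^N x\|$ approaches $\|T^N\|$ (which is legitimate since uniform asymptotic positivity forces $\|T^N\|$ and $\sup_{x \in E_+,\, \|x\|\le 1}\|T^N x\|$ to agree up to an error of order $\varepsilon_N$) should ultimately yield a lower bound of the form $\|g(z_1)\| \ge z_1^N \|T^N\|$ up to a manageable error, again in contradiction with $z_1 > 1 = \lim_N \|T^N\|^{1/N}$.

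The main obstacle is controlling the error part uniformly in the truncation parameter $K$ of the Taylor series. The naive estimate $\|R_K\| \le \|x\| \sum_n z_1^n \varepsilon_n$ is useless because this series generally diverges for $z_1 > 1$ when $\varepsilon_n$ decays slowly. The resolution will require exploiting that for indices $n$ exceeding $K$ the rearranged coefficient in front of $T^n$ is only a partial binomial expansion of $z_1^n = (z_0 + (z_1 - z_0))^n$ and, for the chosen parameters, is in fact dominated by $z_0^n$ up to a polynomial factor. This should permit one to bound the error by the Tauberian quantity $\sum_n \varepsilon_n/\lambda^{n+1}$, which satisfies $(\lambda - 1)\sum_n \varepsilon_n/\lambda^{n+1} \to 0$ as $\lambda \downarrow 1$ whenever $\varepsilon_n \to 0$; taking $z_1 > 1$ close to $1$, which is legitimate because $\resSet(T)$ is open around $1$, then renders the error negligible against the diverging quantity $z_1^N \|T^N\|$, closing the argument.
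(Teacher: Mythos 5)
Your proposal takes a genuinely different route from the paper. The paper never re-expands the resolvent in a Taylor series; instead it proves a one-sided pointwise resolvent estimate (Lemma~\ref{lem:resolvent-estimate-for-unif-asymp-pos-op}),
\[
  |\Res(\lambda,T)x| \le \re\big(\Res(|\lambda|,T)x\big) + \omega(|\lambda|,x), \qquad x\in E_+,\ |\lambda|>1,
\]
with $\omega(r,x)=\sum_{n\ge 0} r^{-(n+1)}\big(|T^nx|-\re T^nx\big)$ so that $\sup_{x\in E_+,\|x\|\le 1}(r-1)\|\omega(r,x)\|\to 0$ as $r\downarrow 1$, and then combines this with a cheap norm-on-the-cone estimate (Remark~\ref{rem:norm-estimate-on-positive-cone}) to force $\|\Res(r,T)\|\to\infty$ along $r\downarrow 1$. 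The Abel-type quantity $(r-1)\sum_n\delta_n/r^{n+1}$ that you correctly identify as the ``Tauberian'' ingredient appears there directly, because the only series involved is the geometric/Neumann one.

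The gap in your proposal is in the error control for the Pringsheim rearrangement. Write the $K$-th partial sum of the Taylor series of $g(z)=(\id_E-zT)^{-1}$ at $z_0\in(0,1)$, evaluated at $z_1>1$, as $g_K(z_1)=\sum_{n\ge 0}c_{n,K}T^n$ with $c_{n,K}=\sum_{k=0}^{\min(K,n)}\binom{n}{k}(z_1-z_0)^kz_0^{n-k}$. For $n\le K$ one has $c_{n,K}=z_1^n$ exactly; for $n>K$ the coefficient is a truncated binomial sum, and while you are right that for fixed $K$ it decays like $z_0^n$ in $n$, the total tail mass does not stay bounded as $K$ grows: one computes $\sum_{n>K}c_{n,K}\sim\rho^{K}$ with $\rho:=(z_1-z_0)/(1-z_0)$, and since $\rho-z_1=z_0(z_1-1)/(1-z_0)>0$ this always has $\rho>z_1$. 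Consequently the error term $\sum_{n}c_{n,K}\|r_{n,x}\|$ contributes at least on the order $\delta\,\rho^{K}$ (with $\delta$ any fixed upper bound for $\varepsilon_n$ at large $n$), which dominates the ``signal'' $z_1^{K}\|y_{K,x}\|$ you want to extract. So the claim that the error ``should permit one to bound by $\sum_n\varepsilon_n/\lambda^{n+1}$'' does not follow from the structure of the coefficients $c_{n,K}$: that Abel-type bound is what the paper gets by staying at the level of the resolvent, but the Taylor-re-expansion route introduces a strictly larger geometric rate in the tail that your sketch does not control, and taking $z_0$ closer to $1$ or $z_1$ closer to $1$ does not remove the inequality $\rho>z_1$. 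A secondary imprecision: the agreement of $\|T^N\|$ and $\sup_{x\in E_+,\|x\|\le 1}\|T^Nx\|$ is not up to an error of order $\varepsilon_N$; for arbitrary $T$ there is only a universal constant factor (as in Remark~\ref{rem:norm-estimate-on-positive-cone}), though this alone would not sink the argument.
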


The proof of the above theorem is most easily understood if we recall how the proof for positive operators works. So let $T$ be a positive operator on a complex Banach lattice $E$. A simple application of the Neumann series representation of the resolvent yields the estimate $|\Res(\lambda,A)f| \le \Res(|\lambda|,A)|f|$ for all $f \in E$ and all $\lambda \in \bbC$ with $|\lambda| > \spr(T)$. From this resolvent estimate one can easily deduce that $\spr(T) \in \spec(T)$. 

For the proof of Theorem~\ref{thm:spec-rad-in-spec-for-unif-asymp-pos-op} we use a similar approach. Let us begin by showing a version of the estimate $|\Res(\lambda,A)f| \le \Res(|\lambda|,A)|f|$; since the positivity is only asymptotic now, a certain error term occurs (compare also \cite[Lemma~7.4]{Daners2016}):

\begin{lemma} \label{lem:resolvent-estimate-for-unif-asymp-pos-op}
	Let $E$ be a complex Banach lattice and let $T \in \calL(E)$ be an operator with $\spr(T) = 1$ which is uniformly asymptotically positive. Then there is a function $\omega: (1,\infty)\times E_+ \to E_+$ with the following properties:
	\begin{itemize}
		\item[(a)] For each $x \in E_+$ and each $\lambda \in \bbC$ satisfying $|\lambda| > 1$ we have
			\begin{align*}
				|\Res(\lambda,T)x| \le \re\big(\Res(|\lambda|,T)x\big) + \omega(|\lambda|,x) \text{.}
			\end{align*}
		\item[(b)] We have $\sup_{x \in E_+,\; \|x\| \le 1} (r - 1)\|\omega(r,x)\| \to 0$ as $r \downarrow 1$.
	\end{itemize}
\end{lemma}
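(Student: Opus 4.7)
The plan is to exploit the Neumann series expansion $\Res(\lambda, T)x = \sum_{n=0}^\infty T^n x/\lambda^{n+1}$, which converges absolutely in $E$ for $|\lambda| > \spr(T) = 1$, by approximating each iterate $T^n x$ by a positive vector whose error is controlled via the uniform asymptotic positivity hypothesis. Set $\varepsilon_n := \sup\{\distPos(T^n y) : y \in E_+, \|y\| \le 1\}$, which tends to $0$ by assumption; by positive homogeneity of $\distPos$ one has $\distPos(T^n x) \le \varepsilon_n \|x\|$ for every $x \in E_+$. For each $n \in \bbN_0$ and each $x \in E_+$ I would then set $y_n(x) := (\re T^n x)^+ \in E_+$ and $e_n(x) := T^n x - y_n(x) \in E$. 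Proposition~\ref{prop:formula-for-distance-to-positive-cone} gives the crucial identity $\|e_n(x)\| = \distPos(T^n x) \le \varepsilon_n \|x\|$.

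The heart of the argument is the pointwise estimate
\begin{equation*}
|T^n x| \le \re(T^n x) + 2\,|e_n(x)| \qquad \text{in } E.
\end{equation*}
To derive it, note that a direct computation gives $\re(e_n(x)) = \re(T^n x) - (\re T^n x)^+ = -(\re T^n x)^-$, so that $y_n(x) = \re(T^n x) + (\re T^n x)^-$. Using the triangle inequality for the complex modulus together with the standard inequality $|\re z| \le |z|$ (valid for every $z$ in a complex Banach lattice), we obtain
\begin{equation*}
|T^n x| \le y_n(x) + |e_n(x)| = \re(T^n x) + (\re T^n x)^- + |e_n(x)| \le \re(T^n x) + 2\,|e_n(x)|,
\end{equation*}
since $(\re T^n x)^- = -\re(e_n(x)) \le |\re(e_n(x))| \le |e_n(x)|$.

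With this in hand, define
\begin{equation*}
\omega(r, x) := 2 \sum_{n=0}^\infty \frac{|e_n(x)|}{r^{n+1}}, \qquad r > 1,\ x \in E_+.
\end{equation*}
Absolute convergence in $E$ and the inclusion $\omega(r, x) \in E_+$ follow from $\|e_n(x)\| \le \|T^n\|\|x\|$ together with the spectral radius formula $\limsup_n \|T^n\|^{1/n} = 1 < r$. Summing the pointwise estimate against $|\lambda|^{-(n+1)}$ and interchanging sum and real part (justified by absolute convergence) yields part~(a). For part~(b) we estimate
\begin{equation*}
\sup_{x \in E_+,\ \|x\| \le 1} (r-1)\,\|\omega(r,x)\| \le 2(r-1) \sum_{n=0}^\infty \frac{\varepsilon_n}{r^{n+1}}.
\end{equation*}
Given $\eta > 0$, choose $N$ with $\varepsilon_n \le \eta$ for all $n \ge N$. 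The tail contribution is at most $2\eta(r-1)\sum_{n=0}^\infty r^{-(n+1)} = 2\eta$, while the finite initial segment stays bounded as $r \downarrow 1$ and therefore vanishes after multiplication by $(r-1)$. Letting $\eta \downarrow 0$ gives (b).

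The main subtlety is establishing the pointwise estimate cleanly, since $T$ is not assumed to be real, so that $T^n x$ may genuinely lie in the complex Banach lattice $E \setminus E_\bbR$; but the lattice manipulations above handle this entirely via Proposition~\ref{prop:formula-for-distance-to-positive-cone} and the inequality $|\re z| \le |z|$. A secondary technical point, namely absolute convergence of the series defining $\omega(r,x)$ without any power-boundedness assumption on $T$, is taken care of by invoking the spectral radius formula at each fixed $r > 1$.
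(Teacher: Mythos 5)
Your proof is correct and runs along the same backbone as the paper's: expand the resolvent as a Neumann series, replace each iterate $T^n x$ by a positive approximant, and collect the errors into an $\omega$-term controlled by $\distPos$. The one genuinely different ingredient is how you establish the pointwise estimate $|T^n x| - \re(T^n x) \le 2\,|e_n(x)|$ with $e_n(x) = T^n x - (\re T^n x)^+$. The paper isolates this as Remark~\ref{rem:estimate-for-real-part-and-distance-to-positive-cone} (in the equivalent norm form $\| |y| - \re y\| \le 2\distPos(y)$) and proves it by reducing to $\bbC$, then to $\Cont(K;\bbC)$, and then to the principal ideal $E_{|x|}$. You instead derive the pointwise inequality directly from the decomposition $T^n x = (\re T^n x)^+ + e_n(x)$, the triangle inequality for the complex modulus, the identity $\re(e_n(x)) = -(\re T^n x)^-$, and the elementary bound $|\re z| \le |z|$ — a shorter, intrinsically lattice-theoretic route that avoids the $\Cont(K)$ reduction entirely (apart from the appeal to Proposition~\ref{prop:formula-for-distance-to-positive-cone} for $\|e_n(x)\| = \distPos(T^n x)$, which the paper also uses). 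Your $\omega(r,x) = 2\sum_n |e_n(x)|/r^{n+1}$ dominates the paper's $\omega(r,x) = \sum_n (|T^n x| - \re T^n x)/r^{n+1}$ pointwise but satisfies the same norm bound, so part~(b) goes through identically. A minor stylistic note: for convergence of the series you invoke the spectral radius formula, whereas the fact that $\varepsilon_n \to 0$ (hence is bounded) already suffices; both work.
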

\begin{proof}
	For each $r > 1$ and each $x \in E_+$ we define
	\begin{align*}
		\omega(r,x) = \sum_{n=0}^\infty \frac{1}{r^{n+1}} \big( |T^nx| - \re (T^n x) \big).
	\end{align*}
	Let us show that this function fulfils the assertions~(a) and~(b).
	
	(a)	For every $x \in E_+$ and every $\lambda \in \bbC$, $|\lambda| > 1$, we have
	\begin{align*}
		\omega(|\lambda|,x) & = \sum_{n=0}^\infty |\frac{1}{\lambda^{n+1}} T^n x| - \sum_{n=0}^\infty \frac{1}{|\lambda|^{n+1}} \re(T^n x) \ge \\
		& \ge |\sum_{n=0}^\infty \frac{1}{\lambda^{n+1}} T^n x| - \re \sum_{n=0}^\infty \frac{1}{|\lambda|^{n+1}} T^n x.
	\end{align*}
	The first summand in the latter term equals $|\Res(\lambda,T)x|$ and the second summand equals $\re \big(\Res(|\lambda|,T)x\big)$, so we obtain (a).
	
	(b) Define $\delta_n := \sup_{x \in E_+, \; \|x\| \le 1} \distPos(T^n x)$ for each $n \in \bbN_0$. Since $T$ is uniformly asymptotically positive, we have $\delta_n \to 0$ as $n \to \infty$. As explained in the subsequent Remark~\ref{rem:estimate-for-real-part-and-distance-to-positive-cone} we have $\| |y| - \re y \| \le 2 \distPos(y)$ for each $y \in E$. Using this, we obtain that
	\begin{align*}
		\sup_{x \in E_+, \; \|x\| \le 1} \|\omega(r,x)\| \le \sup_{x \in E_+, \; \|x\| = 1} 2 \sum_{n=0}^\infty \frac{1}{r^{n+1}} \dist(T^nx,E_+) \le 2 \sum_{n=0}^\infty \frac{1}{r^{n+1}} \delta_n \text{,}
	\end{align*}
	for every $r > 1$. 
	
	Using that $\sum_{n=0}^\infty \frac{r-1}{r^{n+1}} = 1$ for all $r > 1$ and that $\delta_n \to 0$ as $n \to \infty$, we can see that $\sum_{n=0}^\infty \frac{r-1}{r^{n+1}}\delta_n \to 0$ as $r \downarrow 1$. Hence, we obtain~(b).
\end{proof}

In the proof of Lemma~\ref{lem:resolvent-estimate-for-unif-asymp-pos-op} we made use of the following observation.

\begin{remark} \label{rem:estimate-for-real-part-and-distance-to-positive-cone}
	Let $E$ be a complex Banach lattice. Then we have $\| |x| - \re x \| \le 2 \distPos(x)$ for every $x \in E$.
\end{remark}
\begin{proof}
	According to Proposition~\ref{prop:formula-for-distance-to-positive-cone} we have have $\|x - (\re x)^+\| = \distPos(x)$, so it suffices to show that $0 \le |x| - \re x \le 2|x - (\re x)^+|$. 
	
	The first equality is obvious since we have $\re x \le |\re x| \le |x|$. In order to prove the second inequality $|x| - \re x \le 2|x - (\re x)^+|$ one argues as in the proof of Proposition~\ref{prop:formula-for-distance-to-positive-cone}: first one checks by a brief computation that the inequality holds if $E = \bbC$ and hence it also holds if $E$ is the space of continuous complex-valued functions on any compact Hausdorff space. This implies that the inequality is true in the principal ideal $E_{|x|}$ and hence in $E$.
\end{proof}

The last ingredient that we need for the proof of Theorem~\ref{thm:spec-rad-in-spec-for-unif-asymp-pos-op} is the following simple observation about the norm of operators on a complex Banach lattice.

\begin{remark} \label{rem:norm-estimate-on-positive-cone}
	Let $E$ be a complex Banach lattice and let $T \in \calL(E)$. Then there exists a vector $x \in E_+$ of norm $\|x\| \le 1$ such that $\|Tx\| \ge \frac{1}{8}\|T\|$.
\end{remark}
\begin{proof}
	The assertion is obvious of $T = 0$, so assume that $\|T\| > 0$. Then we can find a vector $z \in E$ of norm $\|z\| \le 1$ such that $\frac{1}{2}\|T\| \le \|Tz\|$ and hence,
	\begin{align*}
		\frac{1}{2}\|Tz\| \le \|T(\re z)^+\| + \|T(\re z)^-\| + \|T(\im z)^+\| + \|T(\im z)^-\|.
	\end{align*}
	Thus, at least one the latter four summands is $\ge \frac{1}{8}\|Tz\|$.
\end{proof}

The estimate in the above remark is of course not optimal, but it suffices for our purposes. Using the resolvent estimate from Lemma~\ref{lem:resolvent-estimate-for-unif-asymp-pos-op} we can now prove Theorem~\ref{thm:spec-rad-in-spec-for-unif-asymp-pos-op}:

\begin{proof}[Proof of Theorem~\ref{thm:spec-rad-in-spec-for-unif-asymp-pos-op}]
	We may assume that $\spr(T) = 1$. Let $\lambda \in \bbC$ be a spectral value of $T$ of modulus $1$ and choose a sequence $(r_n) \subseteq (1,\infty)$ which converges to $1$. According to Remark~\ref{rem:norm-estimate-on-positive-cone} we can find a sequence of vectors $(x_n) \subseteq E_+$, each of them of norm $\le 1$, such that
	\begin{align*}
		\|\Res(r_n\lambda,T)x_n\| \ge \frac{1}{8} \|\Res(r_n\lambda,T)\| \ge \frac{1}{8} \frac{1}{\dist(r_n\lambda,\spec(T))} = \frac{1}{8(r_n-1)}.
	\end{align*}
	Let $\omega: (1,\infty) \times E_+ \to E_+$ be as in Lemma~\ref{lem:resolvent-estimate-for-unif-asymp-pos-op}. For each index $n$ we define $\delta_n := \sup_{x \in E_+, \, \|x\| \le 1} \|\omega(r_n,x)\|$. Then
	\begin{align*}
		(r_n-1) & \|\Res(r_n,T)\| \ge (r_n-1)\|\re (\Res(r_n,T)x_n)\| \\
		& \ge (r_n-1)\|\Res(r_n\lambda,T)x_n\| - (r_n-1)\delta_n \ge \frac{1}{8} - (r_n-1)\delta_n \to \frac{1}{8}
	\end{align*}
	as $n \to \infty$, so $\|\Res(r_n,T)\| \to \infty$. Since $(r_n)$ converges to $1$ we conclude that $1 \in \spec(T)$, as claimed.
\end{proof}

\section{The Spectral Radius II} \label{section:the-spectral-radius-2}

While the spectral radius of a uniformly asymptotically positive operator is always contained in its spectrum according to Theorem~\ref{thm:spec-rad-in-spec-for-unif-asymp-pos-op}, the situation is more subtle for individually and weakly asymptotically positive operators. We first demonstrate by a simple example what is \emph{not} true:

\begin{example}
	Let $p \in [1,\infty)$ and let $E := \ell^p(\bbN;\bbC)$. There exists an operator $T \in \calL(E)$ with spectral radius $\spr(T) = 1$ which has the following properties: the powers $T^n$ converges strongly to $0$ as $n \to \infty$, so $T$ is individually asymptotically positive; yet, the spectral radius $\spr(T) = 1$ is not contained in the spectrum $\spec(T)$.
\end{example}
\begin{proof}
	Let $T$ be the multiplication operator with symbol $(-1+\frac{1}{n})_{n \in \bbN}$ (which we have already considered in Example~\ref{ex:asymp-pos-not-equivalent}(a)). Then $T$ fulfils all the properties we claimed.
\end{proof}

The above example raises the question whether at least for weakly \emph{eventually} positive operators the spectral radius is contained in the spectrum. This is indeed the case, and it follows from a more general result: we will see in Theorems~\ref{thm:countable-summability-condition-implies-spec-in-spec} and~\ref{thm:weakly-asymp-pos-with-rate-implies-spec-rad-in-spec} below that the spectral radius of a weakly asymptotically positive operator is automatically contained in the spectrum provided that the sequences $\big(\distPos(\langle x', T^nx \rangle)\big)_{n \in \bbN_0}$ (for $x \in E_+$, $x' \in E'_+$) do not only converge to $0$, but satisfy a certain decay rate.

Such a result might not come as a complete surprise and it is motivated by the following observation: let $T$ be a continuous linear operator on a, say complex, Banach space $E$. If, for all $x \in E$ and all $x' \in E'$, the sequence $(\langle x', T^nx\rangle)_{n \in \bbN_0}$ converges to $0$ \emph{with a certain rate}, then the powers $T^n$ actually converge to $0$ with respect to the operator norm; results of this type can for instance be found in \cite{Weiss1989}, \cite{Neerven1995} and \cite{Glueck2015}. Here, we consider an operator $T$ on a complex Banach lattice $E$ such that for all $x,x'\ge 0$ the sequence $\big((\distPos(\langle x',T^nx\rangle)\big)_{n \in \bbN_0}$ has a certain decay rate. We do not know whether this condition already implies that $T$ is uniformly asymptotically positive, but we are going to show that the condition implies $\spr(T) \in \spec(T)$. Our main results in this section are the following theorem and its corollaries:

\begin{theorem} \label{thm:countable-summability-condition-implies-spec-in-spec}
	Let $E$ be a complex Banach lattice, let $T \in \calL(E)$ with $r(T) > 0$ and define $S := T/\spr(T)$. Let $\Phi$ be an at most countable set of increasing functions $\varphi: [0,\infty) \to [0,\infty)$ which fulfil $\varphi(t) > 0$ for all $t > 0$. Suppose that for every $x \in E_+$ with $\|x\| \le 1$ and for every $x \in E'_+$ with $\|x'\| \le 1$ there exists a function $\varphi \in \Phi$ such that
	\begin{align*}
		\sum_{n=0}^\infty \varphi\big(\distPos(\langle x', S^n x \rangle)\big) < \infty \text{.}
	\end{align*}
	Then $\spr(T) \in \spec(T)$.
\end{theorem}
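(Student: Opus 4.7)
The plan is to argue by contradiction. I would normalise so that $\spr(T) = 1$ and suppose $1 \notin \spec(T)$; since $\spec(T)$ is nonempty and compact with spectral radius $1$, there exists $\lambda \in \spec(T)$ with $|\lambda| = 1$ and $\lambda \ne 1$. Then $\|\Res(r, T)\|$ stays bounded on some interval $(1, 1+\eta]$ (so in particular $(r-1)\|\Res(r, T)\| \to 0$), whereas $\|\Res(r\lambda, T)\| \ge 1/(r-1) \to \infty$ as $r \downarrow 1$. Applying Remark~\ref{rem:norm-estimate-on-positive-cone} to produce a positive vector witnessing the norm of $\Res(r\lambda, T)$ and then decomposing the corresponding Hahn--Banach functional into its four positive parts in $E'$, I obtain $x_r \in E_+$ and $x'_r \in E'_+$, both of norm at most $1$, such that $(r-1)\,|\langle x'_r, \Res(r\lambda, T)\,x_r\rangle| \ge c > 0$ for all $r$ close to $1$; the rest of the argument will contradict this lower bound.

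The workhorse comes from the Neumann series. For $x \in E_+$, $x' \in E'_+$ of norm $\le 1$ and $r > 1$, set $a_k := \langle x', T^k x\rangle$ and apply the scalar inequality $|a| - \re a \le 2\distPos(a)$ (the $\bbC$-valued case of the computation in Remark~\ref{rem:estimate-for-real-part-and-distance-to-positive-cone}) to each term in $|\langle x', \Res(r\lambda, T)x\rangle| \le \sum_k r^{-k-1}|a_k|$; this yields
\begin{equation*}
|\langle x', \Res(r\lambda, T)\,x\rangle| \;\le\; \|\Res(r, T)\| + 2\,\omega(r, x, x'), \qquad \omega(r, x, x') := \sum_{k \ge 0} r^{-k-1}\,\distPos(a_k).
\end{equation*}
The task now splits in two: first to produce a small ``bi-box'' in the positive unit balls on which $(r-1)\,\omega(r, x, x')$ tends to zero uniformly as $r \downarrow 1$, and then to transfer this control to arbitrary $(z, z')$ via polarisation.

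The uniformity is produced by a Baire-category argument. After replacing each $\varphi \in \Phi$ by its lower-semicontinuous envelope (which preserves the hypothesis and the positivity condition $\varphi(t) > 0$ for $t > 0$), consider the closed sets
\begin{equation*}
C_{\varphi, M, N} := \bigl\{(x,x') : x \in E_+,\; x' \in E'_+,\; \|x\|, \|x'\| \le 1,\; \textstyle\sum_k \varphi(\distPos(a_k)) \le M,\; \sup_k \distPos(a_k) \le N\bigr\}.
\end{equation*}
As $(\varphi, M, N)$ ranges over the countable set $\Phi \times \bbN \times \bbN$, these sets cover the product of the closed positive unit balls, since summability of $\varphi(\distPos(a_k))$ forces $\distPos(a_k) \to 0$. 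Baire's theorem, combined with the fact that $\{\|x\| = 1\} \cup \{\|x'\| = 1\}$ has empty interior in the product space, yields $\varphi_0, M_0, N_0$, a point $(x_0, x'_0)$ with $\|x_0\|, \|x'_0\| < 1$, and $\delta > 0$ smaller than $\min(1 - \|x_0\|, 1 - \|x'_0\|)$ such that the closed bi-box $\{(u, u') : \|u - x_0\| \le \delta, \|u' - x'_0\| \le \delta\}$ lies in $C_{\varphi_0, M_0, N_0}$. On this bi-box one has $\#\{k : \distPos(a_k) \ge \epsilon\} \le M_0/\varphi_0(\epsilon)$ and $\sup_k \distPos(a_k) \le N_0$, uniformly in $(x, x')$; splitting the defining sum of $\omega$ at the threshold $\epsilon$ gives
\begin{equation*}
(r-1)\,\omega(r, x, x') \;\le\; \epsilon + (r-1)\,\frac{M_0 N_0}{\varphi_0(\epsilon)},
\end{equation*}
which tends to zero after letting first $r \downarrow 1$ and then $\epsilon \downarrow 0$.

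Lifting this to arbitrary $z \in E_+$, $z' \in E'_+$ of norm $\le 1$ is done through the polarisation identity
\begin{equation*}
\delta^2\,\langle z', \Res(r\lambda, T)\,z\rangle \;=\; \sum_{\alpha, \beta \in \{0, 1\}} (-1)^{\alpha + \beta}\,\bigl\langle x'_0 + \beta\delta z',\; \Res(r\lambda, T)\,(x_0 + \alpha\delta z)\bigr\rangle,
\end{equation*}
whose four right-hand pairings all lie in the bi-box thanks to the choice of $\delta$. Feeding the workhorse estimate into each term and multiplying by $(r-1)$ gives $\delta^2(r-1)|\langle z', \Res(r\lambda, T)z\rangle| \to 0$ uniformly in $(z, z')$; specialising to $(x_r, x'_r)$ then contradicts the lower bound from the first paragraph. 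The delicate part is the uniform decay of $(r-1)\omega$: summability of $\varphi_0(\distPos(a_k))$ controls only the \emph{number} of large indices, while the supplementary uniform bound $\sup_k \distPos(a_k) \le N_0$ is what controls their contribution, and both uniformities must be coaxed out of the hypothesis in a single Baire step over $\Phi \times \bbN \times \bbN$.
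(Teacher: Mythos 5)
Your proof is correct, but it takes a genuinely different route from the paper. The paper reduces the hypothesis to the ``governing'' framework of \cite{Glueck2015}: a lemma there converts the summability condition $\sum_n\varphi\big(\distPos(\langle x',S^nx\rangle)\big)<\infty$ into the existence of a single sequence $f_\varphi\in c_0$ that dominates, for every admissible pair $(x,x')$, the decreasing rearrangement of $\big(\distPos(\langle x',S^nx\rangle)\big)_n$; the core Theorem~\ref{thm:weakly-asymp-pos-with-rate-implies-spec-rad-in-spec} is then proved from the Neumann-series estimate together with an infinite rearrangement inequality and the uniform boundedness principle, and the countably-many-rates case is handled by summing the $f_\varphi$'s. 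You instead give a self-contained argument by contradiction: run Baire's theorem directly on the product of the positive unit balls, using closedness of the sublevel sets $C_{\varphi,M,N}$ (after replacing each $\varphi$ by its lower-semicontinuous envelope, which preserves monotonicity, strict positivity on $(0,\infty)$ and summability) and the covering they provide (strict positivity of $\varphi$ forces $\distPos(\langle x',S^nx\rangle)\to 0$, hence boundedness), to extract a bi-box on which $(r-1)\omega(r,\cdot,\cdot)\to 0$ uniformly; you then transfer this control to all positive unit pairs via the polarisation identity and obtain a contradiction with the lower bound furnished by $\|\Res(r\lambda,T)\|\ge 1/(r-1)$ and Remark~\ref{rem:norm-estimate-on-positive-cone}, since the contradiction hypothesis $\spr(T)\notin\spec(T)$ makes $(r-1)\|\Res(r,T)\|\to 0$. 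Your route avoids both the rearrangement inequality and the external ``governing'' apparatus at the cost of a somewhat heavier Baire/polarisation step, whereas the paper's version is more modular and isolates the decay-rate idea in a reusable lemma. One small wording caveat: the bi-box you extract should be read as intersected with $E_+\times E_+'$ (indeed with the product of the positive unit balls) for the inclusion into $C_{\varphi_0,M_0,N_0}$ to make sense; your later use of the bi-box only at points $(x_0+\alpha\delta z,\,x_0'+\beta\delta z')$ with $z,z'\ge 0$ of norm at most $1$ is consistent with this.
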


It follows readily from Theorem~\ref{thm:countable-summability-condition-implies-spec-in-spec} that we have $\spr(T) \in \spec(T)$ in case that the distance of $\langle x', S^n x \rangle$ to the positive real numbers is summable. Let us formulate this as an extra corollary:

\begin{corollary} \label{cor:negative-parts-summable-implies-spec-in-spec}
	Let $E$ be a complex Banach lattice, let $T \in \calL(E)$ with $r(T) > 0$ and define $S := T/\spr(T)$. Suppose that
	\begin{align*}
		\sum_{n=0}^\infty \distPos(\langle x', S^n x \rangle) < \infty
	\end{align*}
	for all $x \in E_+$, $x' \in E'_+$. Then $\spr(T) \in \spec(T)$.
\end{corollary}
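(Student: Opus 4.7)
The plan is to invoke Theorem~\ref{thm:countable-summability-condition-implies-spec-in-spec} with the simplest possible choice of function class~$\Phi$. Specifically, I would take $\Phi$ to consist of a single function, namely the identity map $\varphi_0 \colon [0,\infty) \to [0,\infty)$, $t \mapsto t$. This $\varphi_0$ is increasing and satisfies $\varphi_0(t) > 0$ for every $t > 0$, so $\Phi = \{\varphi_0\}$ is an at most countable collection of functions of the type demanded by the theorem.

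With this choice, the summability hypothesis appearing in Theorem~\ref{thm:countable-summability-condition-implies-spec-in-spec} — that for every $x \in E_+$ with $\|x\| \le 1$ and every $x' \in E'_+$ with $\|x'\| \le 1$ there exists some $\varphi \in \Phi$ with $\sum_{n=0}^\infty \varphi\big(\distPos(\langle x', S^n x\rangle)\big) < \infty$ — reduces to the statement $\sum_{n=0}^\infty \distPos(\langle x', S^n x\rangle) < \infty$. This is precisely the assumption of the corollary, which is in fact made for \emph{all} $x \in E_+$ and $x' \in E'_+$ and so a fortiori holds for those of norm at most one. Therefore all hypotheses of Theorem~\ref{thm:countable-summability-condition-implies-spec-in-spec} are fulfilled, and its conclusion $\spr(T) \in \spec(T)$ is exactly what the corollary asserts.

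There is no genuine obstacle in this deduction: the corollary is a one-line specialisation of the theorem, with all of the substantive work — construction of the resolvent estimate, control of the error terms, and extraction of a spectral value on the peripheral circle — already carried out in the proof of the theorem. Nothing further needs to be verified here.
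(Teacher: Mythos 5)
Your proof is correct and follows exactly the intended route: the paper offers no separate proof of this corollary, merely remarking that it ``follows readily'' from Theorem~\ref{thm:countable-summability-condition-implies-spec-in-spec}, and the natural way to see that is precisely your choice $\Phi = \{\mathrm{id}\}$.
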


From Theorem~\ref{thm:countable-summability-condition-implies-spec-in-spec} (or from Corollary~\ref{cor:negative-parts-summable-implies-spec-in-spec}) it follows, in particular, that the spectral radius of a weakly eventually positive operators is contained in the spectrum. We state this in an extra corollary, too:

\begin{corollary} \label{cor:weakly-ev-pos-implies-spec-in-spec}
	Let $E \not= \{0\}$ be a complex Banach lattice and let $T \in \calL(E)$ be weakly eventually positive. Then $\spr(T) \in \spec(T)$.
\end{corollary}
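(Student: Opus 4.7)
The plan is to derive the result as an essentially immediate consequence of Corollary~\ref{cor:negative-parts-summable-implies-spec-in-spec}, after dispatching the trivial case $\spr(T) = 0$ separately.

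First I would handle the case $\spr(T) = 0$. Since $E \neq \{0\}$ is a complex Banach space, $\spec(T)$ is non-empty, so every $\lambda \in \spec(T)$ must satisfy $|\lambda| \le \spr(T) = 0$, forcing $0 \in \spec(T) = \{0\}$. Hence $\spr(T) = 0 \in \spec(T)$ as required.

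Next, assume $\spr(T) > 0$ and set $S := T/\spr(T)$. The key observation is that weak eventual positivity of $T$ transfers directly to $S$: for any $x \in E_+$ and $x' \in E'_+$, the scalar $\langle x', S^n x\rangle$ differs from $\langle x', T^n x\rangle$ only by the positive factor $\spr(T)^{-n}$, so the definition of weak eventual positivity yields some $n_0 = n_0(x,x') \in \bbN_0$ with $\langle x', S^n x\rangle \ge 0$, and therefore $\distPos(\langle x', S^n x\rangle) = 0$, for every $n \ge n_0$. Consequently
\begin{align*}
  \sum_{n=0}^\infty \distPos(\langle x', S^n x\rangle) = \sum_{n=0}^{n_0 - 1} \distPos(\langle x', S^n x\rangle) < \infty
\end{align*}
for all $x \in E_+$ and $x' \in E'_+$, since only finitely many terms can be non-zero. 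Corollary~\ref{cor:negative-parts-summable-implies-spec-in-spec} then yields $\spr(T) \in \spec(T)$, completing the proof.

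There is no serious obstacle here; the argument is really just an unpacking of definitions combined with the earlier corollary. The only subtle point worth being explicit about is the trivial case $\spr(T) = 0$, which is not covered by Corollary~\ref{cor:negative-parts-summable-implies-spec-in-spec} (which assumes $\spr(T) > 0$) and which is why the hypothesis $E \neq \{0\}$ appears in the statement.
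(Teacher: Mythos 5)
Your proposal is correct and follows the paper's intended route: the paper explicitly states that this corollary is a consequence of Theorem~\ref{thm:countable-summability-condition-implies-spec-in-spec} (or Corollary~\ref{cor:negative-parts-summable-implies-spec-in-spec}), and you have simply unpacked that derivation, including the correct handling of the trivial case $\spr(T)=0$ which explains the hypothesis $E \neq \{0\}$.
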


Finally, we formulate another consequence of Theorem~\ref{thm:countable-summability-condition-implies-spec-in-spec} which is more general then Corollary~\ref{cor:negative-parts-summable-implies-spec-in-spec}:

\begin{corollary} \label{cor:negative-parts-in-l_p-implies-spec-in-spec}
	Let $E$ be a complex Banach lattice, let $T \in \calL(E)$ with $r(T) > 0$ and define $S := T/\spr(T)$. Suppose that
	\begin{align*}
		\big(\distPos(\langle x', S^n x \rangle)\big)_{n \in \bbN_0} \in \bigcup_{1 \le p < \infty} \ell^p(\bbN_0; \bbR)
	\end{align*}
	for all $x \in E_+$, $x' \in E'_+$. Then $\spr(T) \in \spec(T)$.
\end{corollary}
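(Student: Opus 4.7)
My plan is to deduce this from Theorem~\ref{thm:countable-summability-condition-implies-spec-in-spec} by choosing $\Phi$ to be the countable family of power functions. Concretely, take $\Phi := \{\varphi_k : k \in \bbN\}$ with $\varphi_k(t) := t^k$; each $\varphi_k : [0,\infty) \to [0,\infty)$ is increasing and strictly positive on $(0,\infty)$, and the family is countable, so $\Phi$ is admissible for that theorem.

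The key observation that makes this work is that the $\ell^p$-scale on $\bbN_0$ is nested: $\ell^p(\bbN_0;\bbR) \subseteq \ell^q(\bbN_0;\bbR)$ whenever $1 \le p \le q < \infty$. This is because any $\ell^p$-sequence tends to $0$, so from some index on its terms are bounded by $1$, where $t^q \le t^p$; the tail sum is thus controlled by the $\ell^p$-sum and the initial segment is finite. Consequently, if the sequence $\bigl(\distPos(\langle x', S^n x\rangle)\bigr)_{n \in \bbN_0}$ lies in $\ell^p$ for some $p \in [1,\infty)$, then it lies in $\ell^k$ for any integer $k \ge p$.

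Given $x \in E_+$ with $\|x\| \le 1$ and $x' \in E'_+$ with $\|x'\| \le 1$, the hypothesis supplies some $p \in [1,\infty)$ for which the above sequence is in $\ell^p$; picking $k := \lceil p \rceil \in \bbN$, we find $\varphi_k \in \Phi$ with
\begin{align*}
	\sum_{n=0}^\infty \varphi_k\bigl(\distPos(\langle x', S^n x\rangle)\bigr) < \infty.
\end{align*}
Theorem~\ref{thm:countable-summability-condition-implies-spec-in-spec} then yields $\spr(T) \in \spec(T)$. There is no real obstacle here; the corollary is essentially a repackaging of the theorem, using only the nested structure of the $\ell^p$-spaces on $\bbN_0$ to reduce the uncountable collection of admissible exponents $[1,\infty)$ to a countable family of functions $\varphi \in \Phi$.
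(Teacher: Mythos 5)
Your proof is correct and is essentially the intended argument: the paper states this corollary as an immediate consequence of Theorem~\ref{thm:countable-summability-condition-implies-spec-in-spec} without spelling out details, and the canonical reduction is exactly what you give — take $\Phi = \{t \mapsto t^k : k \in \bbN\}$ and use the nesting $\ell^p(\bbN_0;\bbR) \subseteq \ell^q(\bbN_0;\bbR)$ for $p \le q$ to round the exponent up to an integer.
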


For the proof of Theorem~\ref{thm:countable-summability-condition-implies-spec-in-spec} we employ techniques from \cite{Glueck2015}. We first need to introduce a bit of terminology. By $c_0$ we denote the space of all complex-valued sequences which are indexed over $\bbN_0$ and which converge to $0$. We endow this space with the supremum norm which renders it a complex Banach lattice. In particular, for every $u \in (c_0)_+$, the principal ideal $(c_0)_u$ is defined as explained in the preliminaries.

For a sequence $0 \le x = (x_n)_{n \in \bbN_0} \in c_0$ we denote by $x^* := (x_n^*)_{n \in \bbN_0} \in c_0$ the \emph{decreasing rearrangement} of $x$, i.e.~the sequence consisting of the same entries as $x$ (including multiplicities) which have been rearranged in decreasing order. The following definition is taken from \cite[Definition~3.1]{Glueck2015}.

\begin{definition}
	Let $F \subset (c_0)_+$ and let $a = (a_n)_{n \in \bbN_0}$ be a sequence of complex numbers. We say that $F$ \emph{governs} the sequence $a$ if $a \in c_0$ and if there exists an element $f \in F$ such that the decreasing rearrangement $|a|^*$ of $|a|$ is contained in the principal ideal $(c_0)_f$.
\end{definition}

A corollary of the following quite general result will be the key to give the proof of Theorem~\ref{thm:countable-summability-condition-implies-spec-in-spec} at the end of the section.

\begin{theorem} \label{thm:weakly-asymp-pos-with-rate-implies-spec-rad-in-spec}
	Let $E$ be a complex Banach lattice, let $T \in \calL(E)$ with $\spr(T) > 0$ and define $S := T/\spr(T)$. Let $f \in (c_0)_+$ and suppose that $\{f\}$ governs the sequence
	\begin{align*}
		\big(\distPos(\langle x', S^n x \rangle)\big)_{n \in \bbN_0}
	\end{align*}
	for each $x \in E_+$ and each $x \in E'_+$. Then $r(T) \in \sigma(T)$.
\end{theorem}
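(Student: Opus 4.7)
The plan is to normalise $\spr(T) = 1$ (so $S = T$) and argue by contradiction, assuming $1 \notin \spec(T)$. Since the spectrum of $T$ is a nonempty compact subset of $\bbC$ with spectral radius $1$, there exists $\lambda \in \spec(T)$ with $|\lambda| = 1$, and this $\lambda$ is necessarily distinct from $1$; the usual lower bound for the resolvent norm then gives $(r-1)\|\Res(r\lambda,T)\| \geq 1$ for every $r > 1$. I will obtain a contradiction by showing that $(r-1)\|\Res(r\lambda,T)\| \to 0$ as $r \downarrow 1$. The starting point is the elementary scalar inequality $|a| \leq \re a + 2\distPos(a)$ for $a \in \bbC$, which is the case $E = \bbC$ of Remark~\ref{rem:estimate-for-real-part-and-distance-to-positive-cone}. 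Inserted into the Neumann series representations of $\Res(r\lambda,T)$ and $\Res(r,T)$, it yields
\begin{equation*}
  |\langle x', \Res(r\lambda,T) x\rangle| \leq \re\langle x', \Res(r,T) x\rangle + 2 \sum_{n=0}^\infty \frac{\distPos(\langle x', S^n x\rangle)}{r^{n+1}}
\end{equation*}
for all $x \in E_+$ and $x' \in E'_+$, and under the assumption $1 \notin \spec(T)$ the first summand on the right is uniformly bounded for $r$ close to $1$ and $\|x\|, \|x'\| \leq 1$.

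To obtain uniform control over the error sum I would invoke the Baire category theorem. For each $m \in \bbN$, put
\begin{equation*}
  A_m := \left\{(x, x') \in B_{E_+} \times B_{E'_+} : \big(\distPos(\langle x', S^n x\rangle)\big)^*_n \leq m f_n \text{ for every } n \in \bbN_0 \right\},
\end{equation*}
where $B_{E_+}$ carries the norm topology and $B_{E'_+}$ the weak-$*$ topology. The condition $b^*_n \leq m f_n$ is equivalent to ``every $(n+1)$-element subset $K \subseteq \bbN_0$ contains some $k$ with $b_k \leq m f_n$'', and each of the latter conditions is closed because $(x, x') \mapsto \distPos(\langle x', S^k x\rangle)$ is continuous in the norm~$\times$~weak-$*$ product topology; hence every $A_m$ is closed. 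The hypothesis forces $\bigcup_m A_m = B_{E_+} \times B_{E'_+}$, and this product is a Baire space: $B_{E_+}$ is a complete metric space while $B_{E'_+}$ is weak-$*$ compact Hausdorff by Alaoglu and the weak-$*$ closedness of $E'_+$. Consequently some $A_m$ contains an open neighbourhood $V \times U$ of a point $(x_0, x'_0)$; after a harmless shift towards the origin I may assume $\|x_0\|, \|x'_0\| < 1$, leaving room for small positive perturbations.

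On $A_m$ the Hardy--Littlewood rearrangement inequality yields $\sum_n \distPos(\langle x', S^n x\rangle)/r^{n+1} \leq m \sum_n f_n/r^{n+1}$, and since $f \in c_0$ and the weights $(r-1)/r^{n+1}$ form a probability measure on $\bbN_0$ concentrating at infinity, an Abel-summation argument shows $(r-1)\sum_n f_n/r^{n+1} \to 0$ as $r \downarrow 1$. Combined with the key inequality this produces a function $g(r) \to 0$ with $(r-1)|\langle x', \Res(r\lambda,T) x\rangle| \leq g(r)$ uniformly on $V \times U$. The bound is then propagated to the full unit ball by linearity: for $y \in B_{E_+}$ a fixed small $\alpha > 0$ makes $x_0 + \alpha y \in V$, and the identity $\alpha \Res(r\lambda,T) y = \Res(r\lambda,T)(x_0 + \alpha y) - \Res(r\lambda,T) x_0$ together with two applications of the uniform bound gives $(r-1)|\langle x', \Res(r\lambda,T) y\rangle| \leq 2g(r)/\alpha$ for $x' \in U \cap B_{E'_+}$. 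Repeating the trick in $x'$ with a suitable $\beta > 0$ and finally decomposing arbitrary $y \in B_E$ and $w' \in B_{E'}$ into their four positive lattice components yields $(r-1)\|\Res(r\lambda,T)\| \leq Cg(r) \to 0$, contradicting $(r-1)\|\Res(r\lambda,T)\| \geq 1$.

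The main obstacle is the Baire category step: one needs to choose the topology on $B_{E_+} \times B_{E'_+}$ so that the product is a Baire space (the mixed norm~$\times$~weak-$*$ topology works), and the closedness of $A_m$ rests on the finite-subset characterisation of the decreasing rearrangement. A secondary subtlety is that the Baire-produced interior point must be shifted so that $\|x_0\|, \|x'_0\| < 1$ strictly, so that the positive perturbations used in the linear-extension step remain inside the good neighbourhood.
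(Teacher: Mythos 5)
Your argument is correct, but it follows a genuinely different route than the paper's. Both proofs share the same analytic core: normalise $\spr(T)=1$, pick a peripheral spectral value $\mu$ (resp.\ $\lambda$), and combine the Neumann series with the scalar estimate $|a| - \re a \le 2\distPos(a)$ from Remark~\ref{rem:estimate-for-real-part-and-distance-to-positive-cone} and a rearrangement inequality to control $|\langle x', \Res(r\mu,T)x\rangle| - \re\langle x', \Res(r,T)x\rangle$ by $2c\,\alpha(r)$, where $\alpha(r)=\sum_n f_n/r^{n+1}$ and $c$ depends on the pair $(x,x')$. The two proofs diverge in how they cope with this $(x,x')$-dependence. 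The paper argues \emph{directly}: since $(r-1)\alpha(r)\to 0$ forces $\|\Res(r\mu,T)\|/\alpha(r)\to\infty$, the uniform boundedness principle applied to the rescaled resolvents produces a \emph{single} pair $(x,x')$ and a sequence $r_k\downarrow 1$ along which $|\langle x', \Res(r_k\mu,T)x\rangle|/\alpha(r_k)\to\infty$; plugging that one pair, with its own constant $c$, into the resolvent inequality forces $\re\langle x',\Res(r_k,T)x\rangle\to\infty$, so $1\in\spec(T)$. You argue \emph{by contradiction}: assuming $1\notin\spec(T)$, you apply the Baire category theorem to the sets $A_m$ in the norm$\times$weak-$*$ product of the positive unit-ball parts to obtain a single governing constant $m$ valid on a relatively open set, then promote that uniform estimate to the whole unit ball by a shift-and-linearity argument, arriving at $(r-1)\|\Res(r\lambda,T)\|\to 0$, which contradicts the lower bound $\ge 1$. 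Both are category arguments, but applied to different objects — the paper to the resolvent family, you to the governing hypothesis — and with opposite logical structure. Your route is heavier: it requires that a complete-metric-times-compact-Hausdorff product is a Baire space, the closedness of each $A_m$ (which does follow from the finite-subset characterisation of the decreasing rearrangement together with the joint norm$\times$weak-$*$ continuity of $(x,x')\mapsto\langle x', S^k x\rangle$ on bounded sets), and the perturbation step to escape the boundary of the positive cone. In exchange it yields a local uniformisation of the governing constant, a cleaner structural statement than the theorem actually needs; for the result at hand the paper's direct UBP argument is the shorter path.
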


We can prove Theorem~\ref{thm:weakly-asymp-pos-with-rate-implies-spec-rad-in-spec} by a method which was also employed in the proof of \cite[Theorem~3.2]{Glueck2015}:

\begin{proof}[Proof of Theorem~\ref{thm:weakly-asymp-pos-with-rate-implies-spec-rad-in-spec}]
	We may assume that $\spr(T) = 1$ and that $f \not= 0$ (if $f = 0$ then we can replace $f$ with an arbitrary function from $(c_0)_+ \setminus \{0\}$). Choose $\mu \in \sigma(T)$ with $|\mu| = 1$.
		
	For every $r > 1$ we define $\alpha(r) := \sum_{n=0}^\infty \frac{f_n}{r^{n+1}} > 0$. Using the Neumann series representation of the resolvent we obtain for all $x \in E_+$, $x' \in E'_+$ and $r \in (1,\infty)$ that
	\begin{align*}
		|\langle x', \Res(r \mu,T)x \rangle| - \re \langle x', & \Res(r,T) x \rangle \le \sum_{n=0}^\infty \frac{|\langle x', T^nx \rangle| - \re \langle x', T^nx \rangle}{r^{n+1}} \\
		& \le 2 \sum_{n=0}^\infty \frac{\distPos(\langle x', T^nx \rangle)}{r^{n+1}} \le 2 \sum_{n=0}^\infty \frac{\distPos(\langle x', T^nx \rangle)^*}{r^{n+1}},
	\end{align*}
	where we used Remark~\ref{rem:estimate-for-real-part-and-distance-to-positive-cone} to obtain the inequality between the first and the second line and where an infinite series version of the rearrangement inequality (see e.g.\ \cite[Lemma~3.3]{Glueck2015}) yields the inequality in the second line.
	
	By assumption there exists a number $c \ge 0$ (which might depend on $x$ and $x'$) such that $\distPos(\langle x', T^nx \rangle)^* \le cf_n$ for each $n \in \bbN_0$, so we conclude that 
	\begin{align}
		\label{eq:resolvent-estimate-ind-asymp-pos}
		\begin{aligned}
			\re \langle x', \Res(r,T)x \rangle & \ge |\langle x', \Res(r\mu,T)x \rangle| - 2c \, \sum_{n=0}^\infty \frac{f_n}{r^{n+1}} \\
			& = |\langle x', \Res(r\mu,T)x \rangle| - 2c \, \alpha(r)
		\end{aligned}
	\end{align}
	for all $r > 1$. 
	
	One can easily check that $(r-1)\alpha(r) \to 0$ for $r \downarrow 1$ since $f \in c_0$. Noting that $\|R(r\mu,T)\| \ge \frac{1}{\operatorname{dist}(r\mu, \sigma(T))} = \frac{1}{r-1}$ we thus conclude that $\lim_{r \downarrow 1} \frac{\|R(r\mu,T)\|}{\alpha(r)} = \infty$. Due to the uniform boundedness principle we can therefore find vectors $x \in E_+$ and $x' \in E'_+$ and a sequence of real numbers $r_k \downarrow 1$ such that
	\begin{align*}
		\lim_{k \to \infty} \frac{|\langle x', R(r_k\mu,T)x \rangle|}{\alpha(r_k)} = \infty \text{.}
	\end{align*}
	Using~\eqref{eq:resolvent-estimate-ind-asymp-pos} and the fact that $\liminf_{r \downarrow} \alpha(r) > 0$ we thus obtain the estimate
	\begin{align*}
		\re \langle x', R(r_k,T) x \rangle \ge \alpha(r_k) \Big( \frac{|\langle x', R(r_k\mu,T)x \rangle|}{\alpha(r_k)} - 2c \Big) \overset{k \to \infty}{\to} \infty \text{.}
	\end{align*}
	Hence, $\lim_{k \to \infty} \|R(r_k,T)\| = \infty$ which proves that $1 \in \sigma(T)$.
\end{proof}

In the following corollary we show that the conclusion of Theorem~\ref{thm:weakly-asymp-pos-with-rate-implies-spec-rad-in-spec} remains true if one allows $f$ to vary within a countable set. The proof is virtually the same as the proof of \cite[Proposition~3.4 and~Corollary~3.5]{Glueck2015}; for the convenience of the reader we include the entire argument here.

\begin{corollary} \label{cor:weakly-asymp-pos-with-countably-varying-rate-implies-spec-rad-in-spec}
	Let $E$ be a complex Banach lattice, let $T \in \calL(E)$ with $\spr(T) > 0$ and define $S := T/\spr(T)$. Let $F \subseteq (c_0)_+$ be an at most countable set and suppose that $F$ governs the sequence
	\begin{align*}
		\big(\distPos(\langle x', S^n x \rangle)\big)_{n \in \bbN_0}
	\end{align*}
	for each $x \in E_+$ and each $x \in E'_+$. Then $r(T) \in \sigma(T)$.
\end{corollary}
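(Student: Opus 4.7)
The plan is to reduce the corollary to the single-function case already handled by Theorem~\ref{thm:weakly-asymp-pos-with-rate-implies-spec-rad-in-spec} by manufacturing one dominating function $g \in (c_0)_+$ out of the countable family $F$. More precisely, I would enumerate $F = \{f_k : k \in \bbN\}$ (repeating entries or padding with the zero sequence if $F$ happens to be finite) and set
\[
    g := \sum_{k=1}^\infty \frac{1}{2^k(1+\|f_k\|_\infty)}\, f_k.
\]
The $k$-th summand has supremum norm at most $2^{-k}$, so the series converges absolutely in $\ell^\infty$ and defines a bounded nonnegative sequence. Since each $f_k$ belongs to $c_0$ and the terms are dominated by the summable sequence $(2^{-k})_{k \in \bbN}$, dominated convergence (with respect to counting measure on $k$) yields $g_n \to 0$ as $n \to \infty$, so $g \in (c_0)_+$.

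Next I would observe that by construction $f_k \le 2^k(1+\|f_k\|_\infty)\,g$ pointwise for every $k$, which shows that $f_k$ lies in the principal ideal $(c_0)_g$ and hence $(c_0)_{f_k} \subseteq (c_0)_g$ for every $k \in \bbN$. Now fix $x \in E_+$ and $x' \in E'_+$ and let $a := (\distPos(\langle x', S^n x \rangle))_{n \in \bbN_0}$. By assumption there is some $f_k \in F$ governing $a$, i.e.\ $|a|^* \in (c_0)_{f_k}$; by the previous containment $|a|^* \in (c_0)_g$, so the singleton $\{g\}$ governs the sequence $a$.

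Since $(x,x')$ was arbitrary, the hypotheses of Theorem~\ref{thm:weakly-asymp-pos-with-rate-implies-spec-rad-in-spec} are satisfied with the single function $g$, and that theorem yields $\spr(T) \in \spec(T)$, which is the desired conclusion.

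I do not expect any serious obstacle. The only points requiring slight care are showing that $g$ genuinely belongs to $c_0$ (handled by the dominated convergence argument above) and scaling the summands correctly so that each $f_k$ stays in the principal ideal generated by $g$ (this is why the factor $1+\|f_k\|_\infty$ appears in the denominator, which keeps the summands uniformly controlled while preserving the dominance relation $f_k \le c_k g$). Once $g$ is in hand, invoking Theorem~\ref{thm:weakly-asymp-pos-with-rate-implies-spec-rad-in-spec} is immediate.
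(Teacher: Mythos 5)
Your proof is correct and takes essentially the same approach as the paper: enumerate $F$, form a weighted sum $g=\sum_k 2^{-k}(1+\|f_k\|_\infty)^{-1}f_k$ dominating a multiple of every $f_k$, and invoke Theorem~\ref{thm:weakly-asymp-pos-with-rate-implies-spec-rad-in-spec} with the single function $g$. The only (cosmetic) difference is that the paper assumes $0\notin F$ and normalises by $\|f_k\|$, while you keep possible zeros and use $1+\|f_k\|_\infty$ instead.
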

\begin{proof}
	It follows from the assumptions that $F \not= \emptyset$; we may assume that $F$ does not contain $0$. Let $(f_n)_{n \in \bbN}$ be an enumeration of the elements of $F$ (where some of the vectors in $F$ may occur several times in case that $F$ is finite). We define
	\begin{align*}
		f := \sum_{n=1}^\infty \frac{f_n}{2^n\|f_n\|} \in c_0.
	\end{align*}
	Then $f$ dominates a multiple of every element of $F$ and hence, $\{f\}$ governs the sequence
	\begin{align*}
		\big(\distPos(\langle x', S^n x \rangle)\big)_{n \in \bbN_0}
	\end{align*}
	for all $x \in E$ and all $x' \in E'$. According to Theorem~\ref{thm:weakly-asymp-pos-with-rate-implies-spec-rad-in-spec} this implies that $\spr(T) \in \spec(T)$.
\end{proof}

We close this section by demonstrating why Theorem~\ref{thm:countable-summability-condition-implies-spec-in-spec} is a consequence of Corollary~\ref{cor:weakly-asymp-pos-with-countably-varying-rate-implies-spec-rad-in-spec}:

\begin{proof}[Proof of Theorem~\ref{thm:countable-summability-condition-implies-spec-in-spec}]
	Let $A$ be a set of real-valued sequences $a = (a_n)_{n \in \bbN_0} \subseteq [0,\infty)$ and let $\varphi \in \Phi$. It was proved in \cite[Lemma~4.1]{Glueck2015} that if $\sum_{n=0} \varphi(a_n) < \infty$ for each $a \in A$, then there exists a sequence $0 \le f \in c_0$ such that $\{f\}$ governs each element of $A$.
	
	For every $\varphi \in \Phi$ we now define $D_\varphi$ to be the set of all pairs $(x,x') \in E \times E'$ which fulfil $\|x\| \le 1$ and $\|x'\| \le 1$ and for which $\sum_{n=0}^\infty \varphi\big(\distPos(\langle x', S^n x \rangle)\big) < \infty$. As noted at the beginning of the proof we can find, for each $\varphi \in \Phi$, a sequence $0 \le f_\varphi \in c_0$ such that $\{f_\varphi\}$ governs $\big(\distPos(\langle x', S^n x \rangle)\big)_{n \in \bbN_0}$ for all $(x',x) \in D_\varphi$. By assumption, we have
	\begin{align*}
		\bigcup_{\varphi \in \Phi} D_\varphi = \{(x,x') \in E' \times E: \; \|x\| \le 1 \text{ and } \|x'\| \le 1 \},
	\end{align*}
	so $F := \{f_\varphi: \; \varphi \in \Phi\}$ governs $\big(\distPos(\langle x', S^n x \rangle)\big)_{n \in \bbN_0}$ for all $x \in E$ and $x' \in E'$ of norm $\le 1$. By a simple scaling argument it follows that $F$ governs the sequence $\big(\distPos(\langle x', S^n x \rangle)\big)_{n \in \bbN_0}$ for actually all $x \in E$ and $x' \in E'$. Since $F$ is at most countable, it follows from Corollary~\ref{cor:weakly-asymp-pos-with-countably-varying-rate-implies-spec-rad-in-spec} that $\spr(T) \in \spec(T)$.
\end{proof}

\section{Positive Eigenvectors} \label{section:positive-eigenvectors}

In this section we give sufficient conditions for the spectral radius of an operator $T$ to be an eigenvalue of $T$ which admits a positive eigenvector. Even for a positive operator the spectral radius need, in general, not be an eigenvalue at all (the multiplication operator with symbol $(1-\frac{1}{n})$ on $\ell^p(\bbN;\bbC)$ is a counterexample). A very common condition to ensure that a spectral value $\lambda$ of an arbitrary operator $T$ is an eigenvalue is to assume that it is a pole of the resolvent $\Res(\phdot,T)$. This is also a common assumption in Perron--Frobenius theory. Under this assumption we obtain the following Kre\u{\i}n--Rutman type result.

\begin{theorem} \label{thm:positive-eigenvectors}
	Let $E$ be a complex Banach lattice and let $T \in \calL(E)$ with $\spr(T) > 0$. Assume that $T$ is weakly asymptotically positive and that $\spr(T)$ is a spectral value of $T$ and a pole of the resolvent $\Res(\phdot,T)$.
	
	Then $\spr(T)$ is an eigenvalue of $T$ and of the adjoint $T'$ and each of the eigenspaces $\ker(\spr(T) - T)$ and $\ker(\spr(T) - T')$ contains a non-zero positive vector.
\end{theorem}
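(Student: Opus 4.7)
The plan is to rescale so that $\spr(T) = 1$, identify the leading Laurent coefficient of the resolvent at the pole, show it is a positive operator, and then extract positive eigenvectors from its range and from the range of its adjoint.

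Without loss of generality assume $\spr(T) = 1$. Since $1$ is a pole of $\Res(\phdot,T)$ it is automatically an eigenvalue of both $T$ and $T'$, so only the existence of positive eigenvectors remains. Let $k$ denote the order of the pole and let
\begin{align*}
U_{-k} := \lim_{\lambda \downarrow 1} (\lambda - 1)^k \Res(\lambda, T)
\end{align*}
be the leading Laurent coefficient (the limit exists in operator norm, as is standard for finite order poles). A short manipulation of $(\lambda - T)\Res(\lambda, T) = \id_E$ combined with the Laurent expansion gives $(1 - T) U_{-k} = 0$, and $U_{-k} \neq 0$ because the pole has order exactly $k$. Hence the range of $U_{-k}$ is a non-zero subspace of $\ker(1 - T)$, and the whole task reduces to proving that $U_{-k}$ is a positive operator: once this is known, $U_{-k}$ is automatically non-zero somewhere on $E_+$ (since every vector in $E$ is a complex linear combination of positive vectors), and $U_{-k} x_0$ for any $x_0 \in E_+$ with $U_{-k} x_0 \neq 0$ is the sought-after positive eigenvector.

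For the positivity of $U_{-k}$, fix $x \in E_+$ and $x' \in E'_+$ and set $a_n := \langle x', T^n x\rangle \in \bbC$. Decompose $a_n = b_n - e_n$ with $b_n := (\re a_n)^+ \geq 0$. The scalar case of Proposition~\ref{prop:formula-for-distance-to-positive-cone} gives $|e_n| = \distPos(a_n)$, which tends to $0$ by weak asymptotic positivity; in particular $(|e_n|)$ is bounded. Substituting into the Neumann series and separating the two pieces yields, for $\lambda > 1$,
\begin{align*}
(\lambda - 1)^k \langle x', \Res(\lambda, T) x\rangle = (\lambda - 1)^k \sum_{n=0}^{\infty} \frac{b_n}{\lambda^{n+1}} - (\lambda - 1)^{k-1} (\lambda - 1) \sum_{n=0}^{\infty} \frac{e_n}{\lambda^{n+1}}.
\end{align*}
The first summand is a non-negative real number. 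For the second, the Abel average $(\lambda - 1)\sum_n e_n/\lambda^{n+1}$ is bounded in modulus by $(\lambda - 1)\sum_n |e_n|/\lambda^{n+1}$, a convex combination of the null sequence $(|e_n|)$ which tends to $0$ as $\lambda \downarrow 1$; multiplying by the bounded factor $(\lambda - 1)^{k-1}$, the entire second summand vanishes in the limit. Taking $\lambda \downarrow 1$ therefore gives $\langle x', U_{-k} x\rangle \geq 0$, and by the standard duality characterization of $E_+$ in a complex Banach lattice, varying $x'$ over $E'_+$ forces $U_{-k} x \in E_+$.

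For the statement about $T'$, the adjoint $U_{-k}'$ is the leading Laurent coefficient of $\Res(\phdot, T')$ at $1$; it is non-zero, has range in $\ker(1 - T')$, and is positive as the adjoint of a positive operator, so the same reasoning produces a positive eigenvector of $T'$. The only delicate point in the plan is the control of the error term $(\lambda - 1)^k \sum_n e_n/\lambda^{n+1}$: weak asymptotic positivity supplies only qualitative decay $|e_n| \to 0$ with no rate, but this is offset by the pole structure of the resolvent via a standard Abel summability argument.
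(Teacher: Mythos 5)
Your proof is correct and follows essentially the same route as the paper's: identify the leading Laurent coefficient of the resolvent at the pole, prove it is a positive operator by controlling the error term in a Neumann/Abel average via weak asymptotic positivity (the key fact being that $(\lambda-1)\sum_n \lambda^{-(n+1)}$ sums to $1$, so convergence $\distPos(\langle x',T^n x\rangle)\to 0$ kills the error in the limit $\lambda\downarrow 1$), and then read off positive eigenvectors for $T$ and $T'$ from the range of this coefficient and its adjoint. The only cosmetic difference is that you decompose $a_n = b_n - e_n$ explicitly with $|e_n| = \distPos(a_n)$, whereas the paper states the inequality $\distPos(\langle x',\Res(r,T)x\rangle)\le \sum_n r^{-(n+1)}\distPos(\langle x',T^nx\rangle)$ and then uses the Lipschitz continuity of $\distPos$ together with the norm convergence $(r-1)^m\Res(r,T)\to Q_{-m}$; these are two presentations of the same estimate.
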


Recall that sufficient conditions for $\spr(T)$ to be a spectral value of $T$ are given in Sections~\ref{section:the-spectral-radius-1} and~\ref{section:the-spectral-radius-2}. Suppose that $0 < \spr(T) \in \spec(T)$. The assumption that $\spr(T)$ be a pole of $\Res(\phdot,T)$ is for example fulfilled if the essential spectral radius $\spr_\ess(T)$ is strictly smaller than $\spr(T)$ (see e.g.\ \cite[formula~(1.16) on p.\,249]{Engel2000}). It is also fulfilled if there exists an open neighbourhood $U$ of $\spec(T)$ and an analytic function $f: U \to \bbC$ such that $f(T)$ is compact and such that $f(\spr(T)) \not= 0$ \cite[Theorem~5.8-F]{Taylor1958}. In particular, $\spr(T)$ is a pole of $\Res(\phdot,A)$ if $T$ or some power of $T$ is compact. Combining these observations with our results from Sections~\ref{section:the-spectral-radius-1} and~\ref{section:the-spectral-radius-2} we obtain, for instance, the following corollaries:

\begin{corollary}
	Let $E$ be a complex Banach lattice and let $T \in \calL(E)$ be a weakly eventually positive operator with $\spr(T) > 0$. Assume that some power of $T$ is compact or, more generally, that there exists an open neighbourhood $U$ of $\spec(T)$ and an analytic function $f: U \to \bbC$ with $f(\spr(T)) \not= 0$ for which $f(T)$ is compact. 
	
	Then $\spr(T)$ is an eigenvalue of $T$ and $T'$ and each of the eigenspaces $\ker(\spr(T) - T)$ and $\ker(\spr(T) - T')$ contains a non-zero positive vector.
\end{corollary}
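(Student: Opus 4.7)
The plan is to reduce this corollary directly to Theorem~\ref{thm:positive-eigenvectors} by verifying each of its three hypotheses under the assumptions given. I would proceed in the following order.

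First, I would observe that weak eventual positivity implies weak asymptotic positivity. Indeed, if $T$ is weakly eventually positive, then for every $x \in E_+$ and $x' \in E'_+$ there exists $n_0$ with $\langle x', T^n x \rangle \ge 0$ for all $n \ge n_0$, hence $\distPos(\langle x', (T/\spr(T))^n x\rangle) = 0$ for all $n \ge n_0$, which trivially converges to $0$. So the hypothesis of weak asymptotic positivity in Theorem~\ref{thm:positive-eigenvectors} is satisfied.

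Second, I would apply Corollary~\ref{cor:weakly-ev-pos-implies-spec-in-spec} to deduce $\spr(T) \in \spec(T)$. This is exactly the statement that weak eventual positivity implies the spectral radius lies in the spectrum, so nothing extra needs to be proved here.

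Third, I would verify that $\spr(T)$ is a pole of the resolvent $\Res(\phdot,T)$. Under the more general assumption, this is precisely the content of \cite[Theorem~5.8-F]{Taylor1958}: an isolated spectral value $\lambda$ is a pole whenever there is an analytic $f$ defined on a neighbourhood of $\spec(T)$ with $f(T) \in \calK(E)$ and $f(\lambda) \ne 0$ (isolatedness of $\spr(T)$ follows automatically, since $f$ being analytic and non-vanishing at $\spr(T)$ forces $\spr(T)$ to be an isolated point of $\spec(T)$ by the spectral mapping theorem combined with the fact that the spectrum of a compact operator has $0$ as its only possible accumulation point). In the special case that some power $T^k$ is compact, we take $f(z) = z^k$, which is entire, satisfies $f(\spr(T)) = \spr(T)^k > 0$ by hypothesis $\spr(T) > 0$, and yields $f(T) = T^k \in \calK(E)$.

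With all three hypotheses of Theorem~\ref{thm:positive-eigenvectors} verified, I would invoke that theorem to conclude that $\spr(T)$ is an eigenvalue of both $T$ and $T'$ and that each of the eigenspaces $\ker(\spr(T) - T)$ and $\ker(\spr(T) - T')$ contains a non-zero positive vector. There is no real obstacle in the argument; the content of the corollary lies entirely in Theorem~\ref{thm:positive-eigenvectors} and Corollary~\ref{cor:weakly-ev-pos-implies-spec-in-spec}, together with the classical pole criterion from \cite{Taylor1958}.
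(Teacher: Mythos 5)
Your argument is correct and follows the same route as the paper: the implication ``weakly eventually positive $\Rightarrow$ weakly asymptotically positive'' (which the paper uses tacitly), Corollary~\ref{cor:weakly-ev-pos-implies-spec-in-spec} for $\spr(T) \in \spec(T)$, the pole criterion from \cite[Theorem~5.8-F]{Taylor1958}, and then Theorem~\ref{thm:positive-eigenvectors}. One small caveat: your parenthetical derivation of the isolatedness of $\spr(T)$ via the spectral mapping theorem is incomplete in the degenerate case where $f$ is constant near $\spr(T)$ (then $f(\lambda_n) = f(\spr(T))$ for nearby $\lambda_n$ gives no contradiction from the accumulation-point argument), but this is harmless since \cite[Theorem~5.8-F]{Taylor1958} itself already delivers that $\spr(T)$ is an isolated pole.
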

\begin{proof}
	It follows from Corollary~\ref{cor:weakly-ev-pos-implies-spec-in-spec} that $\spr(T)$ is a spectral value of $T$. Moreover, as recalled above, $\spr(T)$ is a pole of the resolvent $\Res(\phdot,T)$. Hence, the assertion follows from Theorem~\ref{thm:positive-eigenvectors}.
\end{proof}

\begin{corollary}
	Let $E$ be a complex Banach lattice and let $T \in \calL(E)$ be an operator which fulfils $0 \le \spr_\ess(T) < \spr(T)$ and which is weakly asymptotically positive. Suppose that $T/\spr(T)$ is power bounded.
	
	Then $T$ is uniformly asymptotically positive; moreover, $\spr(T)$ is an eigenvalue of $T$ and $T'$ and each of the eigenspaces $\ker(\spr(T) - T)$ and $\ker(\spr(T) - T')$ contains a non-zero positive vector.
\end{corollary}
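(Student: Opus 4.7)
The plan is to upgrade weak asymptotic positivity to \emph{uniform} asymptotic positivity via Proposition~\ref{prop:regularity-implies-better-asymp-pos}(b), and then to invoke Theorem~\ref{thm:positive-eigenvectors} for the eigenvector statements. Once uniform asymptotic positivity is in hand, Theorem~\ref{thm:spec-rad-in-spec-for-unif-asymp-pos-op} places $\spr(T)$ in $\spec(T)$, and the hypothesis $\spr_\ess(T) < \spr(T)$ automatically makes this spectral value isolated and a pole of $\Res(\phdot,T)$ of finite order via the Atkinson characterisation of the essential spectrum recalled in the preliminaries.

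Set $S := T/\spr(T)$, so $\spr(S) = 1 > \spr_\ess(S)$ and $S$ is power bounded with $M := \sup_{n \in \bbN_0}\|S^n\| < \infty$. Because $\spr_\ess(S) < 1$, every spectral value of $S$ of modulus strictly greater than $\spr_\ess(S)$ is isolated, of finite algebraic multiplicity, and a pole of the resolvent; in particular $\spec_\per(S)$ is a \emph{finite} set, isolated from the rest of $\spec(S)$. Let $P$ be the spectral projection onto the corresponding spectral subspace $F := PE$, set $G := (I-P)E$, and note that $F$ is finite-dimensional, $E = F \oplus G$, and both summands are $S$-invariant.

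The main technical step is the relative compactness of $\{S^n : n \in \bbN_0\}$ in $\calL(E)$ for the operator norm topology, which I would establish by splitting $S^n = S^n P + S^n(I-P)$. On $G$, the spectrum $\spec(S|_G) = \spec(S) \setminus \spec_\per(S)$ lies in a closed disk of radius strictly less than one (by isolation of the peripheral spectrum), so the spectral radius formula gives $\|S^n(I-P)\| \to 0$ exponentially. On $F$, power boundedness of $S$ restricts to the bound $\|(S|_F)^n\|_{\calL(F)} \le M$ for every $n$; since $F$ is finite-dimensional, the orbit $\{(S|_F)^n\}$ is relatively compact in $\calL(F)$, and composing with the fixed bounded operator $P$ yields relative compactness of $\{S^n P\}$ in $\calL(E)$. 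Adding a relatively compact sequence and a norm-null one produces the desired relative compactness of $\{S^n\}$.

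Applying Proposition~\ref{prop:regularity-implies-better-asymp-pos}(b) now makes $T$ uniformly asymptotically positive; Theorem~\ref{thm:spec-rad-in-spec-for-unif-asymp-pos-op} then gives $\spr(T) \in \spec(T)$, the essential spectral radius hypothesis promotes this spectral value to a pole of $\Res(\phdot,T)$, and Theorem~\ref{thm:positive-eigenvectors} delivers the positive eigenvectors in $\ker(\spr(T) - T)$ and $\ker(\spr(T) - T')$. The only delicate point is the compactness step, and it rests squarely on combining the essential-spectrum hypothesis (which produces the finite-dimensional peripheral spectral subspace $F$) with power boundedness (which is precisely what prevents $(S|_F)^n$ from leaving a bounded set as $n \to \infty$); neither hypothesis alone would suffice.
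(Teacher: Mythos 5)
Your proof is correct and takes essentially the same route as the paper's: use Proposition~\ref{prop:regularity-implies-better-asymp-pos}(b) to upgrade weak to uniform asymptotic positivity, then invoke Theorem~\ref{thm:spec-rad-in-spec-for-unif-asymp-pos-op} and the pole characterisation coming from $\spr_\ess(T)<\spr(T)$ to feed Theorem~\ref{thm:positive-eigenvectors}. The only difference is that the paper asserts the operator-norm relative compactness of $\{(T/\spr(T))^n\}$ as ``easy to see,'' whereas you fill in the standard Riesz-decomposition argument (finite-dimensional peripheral spectral subspace plus geometrically decaying complementary part), which is exactly the intended reasoning.
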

\begin{proof}
	Since $T/\spr(T)$ is power bounded and since $\spr_\ess(T) < \spr(T)$, it is easy to see that the set $\{(T/\spr(T))^n: \; n \in \bbN_0\}$ is relatively compact in $\calL(E)$ with respect to the operator norm topology. Hence, it follows from Proposition~\ref{prop:regularity-implies-better-asymp-pos}(b) that $T$ is uniformly asymptotically positive. 
	
	Thus, the spectral radius $\spr(T)$ is contained in $\spec(T)$ according to Theorem~\ref{thm:spec-rad-in-spec-for-unif-asymp-pos-op}. Since $\spr_\ess(T) < \spr(T)$ we know that $\spr(T)$ is a pole of the resolvent $\Res(\phdot,T)$, so the assertion follows from Theorem~\ref{thm:positive-eigenvectors}.
\end{proof}

The proof of Theorem~\ref{thm:positive-eigenvectors} uses some well-known properties of the Laurent series expansion of the resolvent and is quite elementary:

\begin{proof}[Proof of Theorem~\ref{thm:positive-eigenvectors}]
	We may assume that $\spr(T) = 1$. Let us begin with a preliminary observation. For all $x \in E_+$, all $x' \in E'_+$ and all $r > 1$ we have
	\begin{align*}
		\distPos(\langle x', \Res(r,T)x \rangle) \le \sum_{n=0}^\infty \frac{\distPos(\langle x', T^nx \rangle)}{r^{n+1}}
	\end{align*}
	according to the Neumann series representation of the resolvent. Using that we have $\distPos(\langle x', T^nx \rangle) \to 0$ as $n \to \infty$ and that $(r-1)\sum_{n=0}^\infty \frac{1}{r^{n+1}} = 1$ for all $r > 1$, we thus obtain
	\begin{align}
		\label{eq:resolvent-weakly-asymptotically-positive}
		(r-1)\distPos(\langle x', \Res(r,T)x \rangle) \to 0 \qquad \text{as} \qquad r \downarrow 1.
	\end{align} 
	Now, let $m \in \bbN$ denote the order of $1$ as a pole of the resolvent $\Res(\phdot,T)$ and let
	\begin{align*}
		\Res(\lambda,T) = \sum_{n=-m}^\infty Q_n(\lambda - 1)^n
	\end{align*}
	be the Laurent series expansion of the resolvent about $1$ (where $Q_n \in \calL(E)$ for all $n \in \{-m,-m+1,...\}$). Note that the operator $Q_{-m}$ is non-zero and that its range $Q_{-m}E$ is contained in $\ker(1 - T)$ \cite[Theorem~2 in Section~VIII.8]{Yosida1995}. In particular, $1$ is an eigenvalue of $T$. 
	
	Moreover, the operator $Q_{-m}$ is positive: indeed, $(r-1)^m \Res(r,T)$ converges to $Q_{-m}$ with respect to the operator norm as $r \downarrow 1$, so it follows from~\eqref{eq:resolvent-weakly-asymptotically-positive} that $\distPos(\langle x', Q_{-m}x \rangle) = 0$ for all $x \in E_+$ and all $x' \in E'_+$. Since $Q_{-m}$ is positive and non-zero and since $E = E_+ - E_+$, we can find a vector $x \in E_+$ such that $Q_{-m}x > 0$. Thus, $Q_{-m}x$ is a positive eigenvector of $T$ for the eigenvalue $1$.
	
	Let us now consider the adjoint operator $T'$. It has the same spectrum as $T$ and we have $\Res(\lambda,T') = \Res(\lambda,T)'$ for all $\lambda \in \spec(T') = \spec(T)$. The Laurent series representation of $\Res(\phdot,T')$ about $1$ is thus given by
	\begin{align*}
		\Res(\lambda,T') = \sum_{n=-m}^\infty Q_n' (\lambda - 1)^n.
	\end{align*}
	Since $Q_{-m}' \not= 0$, it follows that $1$ is also a pole of order $m$ of $\Res(\phdot,T')$. As above we have $Q_{-m}'E' \subseteq \ker(1 - T')$. Since $Q_{-m}$ is positive, so is $Q_{-m}'$ and hence, there exists a vector $x' \in E'_+$ such that $Q_{-m}' x' > 0$. Thus, $Q_{-m}'x'$ is a positive eigenvector of $T$ for the eigenvalue $1$.
\end{proof}

\section{The Peripheral Spectrum} \label{section:the-peripheral-spectrum}

In this section we turn to the peripheral spectrum $\spec_\per(T)$ of an asymptotically positive operator $T$. We note once again that $\spec_\per(T)$ is defined to be the set of all spectral values of $T$ with maximal modulus.

Recall that an operator $T$ on a complex Banach space $E$ is called \emph{Abel bounded} if $\sup_{\lambda > \spr(T)} (\lambda - \spr(T))\|\Res(\lambda,T)\| < \infty$. An easy application of the Neumann series representation of the resolvent shows that an operator $T$ with non-zero spectral radius is automatically Abel bounded in case that $T/\spr(T)$ is power bounded. The converse implication is, however, not true: for instance, an operator is automatically Abel bounded if $\spr(T) \not\in \spec(T)$; moreover, there exist even positive operators (for which we always have $\spr(T) \in \spec(T)$) which are Abel bounded, but for which the rescaled operator $T/\spr(T)$ is not power bounded; see \cite[Section~2]{Derriennic1973} for a counterexample.

A deep result in Perron--Frobenius theory asserts that the peripheral spectrum of a positive, Abel-bounded operator $T$ on a complex Banach lattice is automatically cyclic, i.e.\ we have $\spr(T)e^{in\theta} \in \spec(T)$ for all integers $n \in \bbZ$ whenever $\spr(T)e^{i\theta} \in \spec(T)$ ($\theta \in \bbR$). This was proved independently by Krieger \cite[Folgerung 2.2.1(b)]{Krieger1969} and Lotz \cite[Theorem~4.7]{Lotz1968} in the late 1960s. It is worthwhile pointing out that the question whether the peripheral spectrum of \emph{every} positive operator on a complex Banach lattice is cyclic is an open problem until today; we refer to \cite{Glueck2016} and \cite{GlueckGR} for a detailed discussion of this topic and for some recent partial results.

Here we show that the peripheral spectrum of a uniformly \emph{asymptotically} positive operator $T$ is cyclic in case that $T/\spr(T)$ is power bounded:

\begin{theorem} \label{thm:unif-asympt-pos-adjoint-with-bdd-error-cyclcic-per-spec}
	Let $E$ be a complex Banach lattice and let $T \in \calL(E)$ with $\spr(T) > 0$. If $T/\spr(T)$ is power-bounded and $T$ is uniformly asymptotically positive, then $\spec_\per(T)$ is cyclic.
\end{theorem}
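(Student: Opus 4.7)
The plan is to adapt the classical cyclicity theorem of Krieger and Lotz for positive, power-bounded operators (quoted just before the theorem) to the uniformly asymptotically positive setting, treating asymptotic positivity as \emph{positivity up to a vanishing error}. After rescaling we may assume $\spr(T) = 1$, so $T$ is power-bounded and $\spec_\per(T) \subseteq \bbT$. Fix $\lambda = e^{i\theta} \in \spec_\per(T)$; the goal is to show $\lambda^n \in \spec(T)$ for every $n \in \bbZ$. Since $\lambda$ lies on the topological boundary of $\spec(T)$ it belongs to $\spec_\appr(T)$, so there exist unit-norm $x_k \in E$ with $\|(T - \lambda)x_k\| \to 0$. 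Power-boundedness and a telescoping argument then give $\|(T^m - \lambda^m)x_k\| \to 0$ as $k \to \infty$ for each fixed $m \in \bbN_0$, and in particular $\|\,|T^m x_k| - u_k\,\| \to 0$ for every $m$, where $u_k := |x_k|$.

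The next step is a signum construction. For each $k$, Kakutani's theorem identifies the principal ideal $E_{u_k}$ isometrically as a Banach lattice with $\Cont(K_k;\bbC)$ sending $u_k$ to $\one$; since $|x_k| = u_k$, the image of $x_k$ is a function $s_k \in \Cont(K_k;\bbC)$ with $|s_k| = \one$. For every $n \in \bbZ$ the pointwise $n$-th power $s_k^n$ pulls back to an element $x_k^{[n]} \in E_{u_k} \subseteq E$ satisfying $|x_k^{[n]}| = u_k$ and $\|x_k^{[n]}\|_E \le 1$. The key claim is that, after a suitable diagonal extraction, $(x_k^{[n]})_k$ is an approximate eigenvector of $T$ at $\lambda^n$; this would yield $\lambda^n \in \spec_\appr(T) \subseteq \spec(T)$. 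In the purely positive case this rests on the modulus inequality $|Ty| \le T|y|$, which lets one carry out the calculation $T x_k^{[n]} \approx \lambda^n x_k^{[n]}$ inside the $\Cont(K_k)$-functional calculus up to errors of size $O(\|(T-\lambda) x_k\|)$. For our $T$ the modulus inequality fails, but uniform asymptotic positivity supplies an approximate substitute: given $\varepsilon > 0$ there is an $N_0$ with $\distPos(T^N y) \le \varepsilon$ for all $N \ge N_0$ and all $y \in E_+$ with $\|y\| \le 1$. Applying this to the four positive parts of $\re x_k^{[n]}$ and $\im x_k^{[n]}$ (each of norm at most one in $\Cont(K_k;\bbC)$, hence in $E$) yields an ``$\varepsilon$-modulus inequality'' for $T^N$ on bounded subsets of $E_{u_k}$; combined with $\|(T^{Nm} - \lambda^{Nm}) x_k\| \to 0$ and a diagonal extraction in $\varepsilon$ and $k$, one extracts approximate eigenvectors of $T^{N_j}$ at $\lambda^{n N_j}$ for some sequence $N_j \to \infty$. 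The spectral mapping theorem for the power-bounded operator $T^{N_j}$, together with $|\lambda^n| = 1 = \spr(T)$, then forces $\lambda^n \in \spec(T)$.

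The main obstacle will be the bookkeeping in this diagonal step: uniform asymptotic positivity does \emph{not} yield norm-closeness of $T^N$ to a positive operator (this requires additional compactness, cf.\ Proposition~\ref{prop:regularity-implies-better-asymp-pos}), so all error estimates must be tracked on the shifting principal ideals $E_{u_k}$. A cleaner but more abstract alternative, worth pursuing in parallel, is via a Banach-lattice ultrapower $E^{\calU}$: the map $\Phi: E \to E^{\calU}$, $\Phi(x) := [(T^n x)_n]$, is bounded by power-boundedness and, by uniform asymptotic positivity, satisfies $\Phi(E_+) \subseteq (E^{\calU})_+$. Inside the closed sublattice of $E^{\calU}$ generated by $\Phi(E_\bbR)$ one may then hope to produce a genuinely \emph{positive}, power-bounded operator whose peripheral spectrum contains $\spec_\per(T)$, reducing the theorem directly to Krieger--Lotz.
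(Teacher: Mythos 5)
Your ultrapower sketch points in the right direction, but both of your routes as written have genuine gaps. On the first route, the concluding deduction is broken: knowing that $\lambda^{nN_j}$ is an approximate eigenvalue of $T^{N_j}$ for some sequence $N_j \to \infty$ does \emph{not} force $\lambda^n \in \spec(T)$. The spectral mapping theorem only gives, for each $j$, some $\mu_j \in \spec(T)$ with $\mu_j^{N_j} = \lambda^{nN_j}$, i.e.\ $\mu_j = \lambda^n\zeta_j$ for some $N_j$-th root of unity $\zeta_j$; nothing controls $\zeta_j$, so the $\mu_j$ need not approach $\lambda^n$. The signum step is also problematic at its source: pulling $s_k^n$ back via Kakutani would produce an approximate eigenvector at $\lambda^n$ only if $u_k = |x_k|$ were (approximately) a positive fixed vector of $T$, which you do not have. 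Even for genuinely positive power-bounded $T$, the Krieger--Lotz theorem on cyclicity of $\spec_\per(T)$ is not obtained by such a modulus/signum argument; approximate peripheral eigenvectors need not yield approximate positive fixed vectors, which is precisely why that theorem is delicate.

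On the second route, the map $\Phi(x) := (T^m x)^\calU_m$ goes from $E$ into $E^\calU$ and is not an endomorphism, and the hoped-for reduction to Krieger--Lotz via a positive operator whose peripheral spectrum ``contains'' $\spec_\per(T)$ would not in any case transfer cyclicity back to $T$. The paper argues differently: after a first ultrapower realises $\lambda$ as a genuine eigenvalue of $T$, take a second ultrapower $E^\calU$ and choose integers $1 \le k_m \to \infty$ with $\lambda^{k_m} \to 1$; set $Rx^\calU := (T^{k_m-1}x_m)^\calU$ and $Sx^\calU := (T^{k_m}x_m)^\calU$. Formula~\eqref{eq:distance-to-positive-cone-in-ultra-power} together with uniform asymptotic positivity makes $R$, $S$ and $ST^\calU$ positive (note that $T^\calU$ itself need not be), while the eigenvector of $T^\calU$ at $\lambda$ lies in $\ker(1-S)$ because $\lambda^{k_m}\to 1$. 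Passing to bi-adjoints and invoking Proposition~\ref{prop:fixed-space-of-dual-operator} equips $F := \ker(1-\hat S)$ with a complex Banach lattice structure on which $\hat T|_F$ is a lattice \emph{isomorphism} (its inverse being the positive $\hat R|_F$), and cyclicity then follows not from Krieger--Lotz but from the elementary fact that the point spectrum of a lattice homomorphism is cyclic. The ingredients you are missing are exactly these: the choice of $k_m$ with $\lambda^{k_m} \to 1$; the positivity of $R$, $S$, $ST^\calU$ (rather than of $T^\calU$); the bi-adjoint step and Proposition~\ref{prop:fixed-space-of-dual-operator}; and the reduction to lattice-isomorphism point-spectrum cyclicity instead of Krieger--Lotz.
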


We have not been able yet to prove or disprove the same assertion for operators which are merely Abel bounded. Similarly to Krieger and Lotz we employ some kind of lifting technique to transform the peripheral spectrum of an operator into point spectrum (more precisely, we use ultra powers of Banach lattices). The rest of our proof is, however, quite different from the arguments used by Krieger and Lotz. 

Let us give a very brief reminder of ultra powers of Banach lattices. Let $E$ be a complex Banach lattice and let $\calU$ be a free ultra filter on $\bbN$. By $\ell^\infty(\bbN;E)$ we denote the space of all $E$-valued norm bounded sequences, endowed with the supremum norm; note that $\ell^\infty(\bbN;E)$ is itself a complex Banach lattice. By $c_{0,\calU}(\bbN;E)$ we denote the closed ideal in $\ell^\infty(\bbN;E)$ of all sequences which converge to $0$ along $\calU$. The quotient space
\begin{align*}
	E^\calU := \ell^\infty(\bbN;E) / c_{0,\calU}(\bbN;E)
\end{align*}
is called an \emph{ultra power} of $E$; it is itself a complex Banach lattice. For every sequence $x = (x_n) \in \ell^\infty(\bbN;E)$ we denote by $x^\calU := (x_n)^\calU$ the equivalence class of $x$ in $E^\calU$; it is not difficult to see that the norm of $x^\calU$ in $E^\calU$ is given by $\|x^\calU\| = \lim_\calU \|x_n\|$. Moreover, we have 
\begin{align}
	\label{eq:distance-to-positive-cone-in-ultra-power}
	\distPos(x^\calU) = \lim_\calU \distPos(x_n)
\end{align}
for all $x^\calU = (x_n)^\calU \in E^\calU$; this follows from Proposition~\ref{prop:formula-for-distance-to-positive-cone}. For every $x \in E$ we denote by $x^\calU$ the equivalence class of the constant sequence $(x,x,...)$ in $E^\calU$. Note that the mapping $E \to E^\calU$, $x \mapsto x^\calU$ is an isometric lattice homomorphism.

Now, let $T \in \calL(E)$. Then we define $T^\calU \in \calL(E^\calU)$ to be the operator given by $T^\calU x^\calU = (Tx_n)^\calU$ for all $x^\calU \in E^\calU$. The mapping $\calL(E) \to \calL(E^\calU)$, $T \mapsto T^\calU$ is an isometric Banach lattice homomorphism, and $T^\calU$ is positive if and only if $T$ is positive. Moreover, we have $\spec(T) = \spec(T^\calU)$ and $\spec_\pnt(T^\calU) = \spec_\appr(T^\calU) = \spec_\appr(T)$; in particular, the peripheral spectrum of $T^\calU$ consists of eigenvalues of $T^\calU$ and coincides with the peripheral spectrum of $T$. For more details we refer to \cite[Section~V.1]{Schaefer1974}, \cite[pp.\,251--253]{Meyer-Nieberg1991} and to the survey article \cite{Heinrich1980}.

In order to give the proof of Theorem~\ref{thm:unif-asympt-pos-adjoint-with-bdd-error-cyclcic-per-spec} we need one further ingredient, namely the next proposition. Let $E$ be a complex Banach lattice, which is by definition the complexification of a real Banach lattice $E_\bbR$, and let $F \subseteq E$ be a closed vector subspace. We call $F$ a \emph{lattice subspace} of $E$ and the \emph{real part} $F_\bbR := F \cap E_\bbR$ of $F$ fulfils $F_\bbR + iF_\bbR = E_\bbR$ and if $F_\bbR$ is a vector lattice with respect to the order induced by $E_\bbR$. We also recall that the dual space $E'$ of a complex Banach lattice $E$ is itself a complex Banach lattice; more precisely, if $E$ is a complexification of a real Banach lattice $E_\bbR$, then $E'$ is a complexification of $E_\bbR'$ \cite[Corollary~3 to Theorem~IV.1.8]{Schaefer1974}.

\begin{proposition} \label{prop:fixed-space-of-dual-operator}
	Let $E$ be a complex Banach lattice and let $T \in \calL(E)$ be positive and power bounded. Then the fixed space $F := \ker(1-T')$ of the adjoint operator $T'$ is a lattice subspace of $E'$; moreover, there exists a norm on $F$ which is equivalent to the norm induced by $E'$ and which renders $F$ a complex Banach lattice.
\end{proposition}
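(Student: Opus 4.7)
The plan is to produce a positive bounded projection $P \colon E' \to F$ from the power boundedness of $T$ and then to transfer the lattice structure of $E'$ across $P$. Since $T$ is positive it is real, so $T'$ is real as well, which means that $F$ is invariant under complex conjugation and splits as $F = F_\bbR + iF_\bbR$ with $F_\bbR := F \cap E'_\bbR$; it therefore suffices to equip $F_\bbR$ with a real Banach lattice structure that is compatible with the order inherited from $E'_\bbR$ and whose norm is equivalent to $\|\cdot\|_{E'}$.

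To build $P$, set $M := \sup_{n \in \bbN_0} \|T^n\| < \infty$, fix a Banach limit $L$ on $\ell^\infty(\bbN_0;\bbR)$ and define $\langle Px', x\rangle := L\big((\langle x', T^n x\rangle)_{n \in \bbN_0}\big)$ for $x' \in E'_\bbR$ and $x \in E_\bbR$, then extend complex linearly to $E'$. Positivity of $L$ together with positivity of $T$ gives $P \ge 0$; the bound $\|P\| \le M$ is immediate; shift invariance of $L$ gives $T'P = P$, so $PE' \subseteq F$; and the fact that $L$ reproduces constant sequences yields $P|_F = \id_F$. Hence $P$ is a positive projection of norm at most $M$ with range exactly $F$.

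Once $P$ is available, the required structure on $F$ essentially falls out. For $x, y \in F_\bbR$ set $x \vee_F y := P(x \vee_{E'_\bbR} y)$: positivity of $P$ gives $P(x \vee y) \ge Px = x$ and similarly $\ge y$, while any $z \in F_\bbR$ dominating both satisfies $z = Pz \ge P(x \vee y)$. Thus $\vee_F$ is the supremum in the induced order, so $F_\bbR$ is a vector lattice with modulus $|x|_F = P|x|_{E'}$, and $F$ is a lattice subspace of $E'$ in the sense of the paper. Now define $\|x'\|_F := \|P|x'|_{E'}\|_{E'}$ for $x' \in F$. Since $P$ fixes $\re x'$ and $\im x'$ in $F_\bbR$, positivity of $P$ gives $P|x'|_{E'} \ge \cos\theta \cdot \re x' + \sin\theta \cdot \im x'$ for every $\theta \in \bbR$; as $|x'|_{E'}$ is the least majorant of this family in $E'_\bbR$, one obtains $|x'|_{E'} \le P|x'|_{E'}$. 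Combined with $\|P\| \le M$ this yields the sandwich $\|x'\|_{E'} \le \|x'\|_F \le M\|x'\|_{E'}$, from which equivalence of norms, completeness of $(F, \|\cdot\|_F)$, and the lattice norm property of $\|\cdot\|_F$ are routine to verify using positivity of $P$ and that $\|\cdot\|_{E'}$ is a lattice norm.

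The main obstacle is to check that the complex modulus on $F$, viewed as the complexification of the real Banach lattice $F_\bbR$, genuinely coincides with $P|\cdot|_{E'}$, so that $\|\cdot\|_F$ really is the lattice norm coming from the $F$-modulus rather than merely an equivalent lattice-like norm. This reduces to the following general observation, proved by the same two-line argument as for binary suprema: whenever a family $\{w_\alpha\} \subseteq F_\bbR$ admits a supremum $s$ in $E'_\bbR$, the element $Ps$ is the supremum of $\{w_\alpha\}$ in $F_\bbR$ under the induced order. Applied to the family $\{\cos\theta \cdot \re x' + \sin\theta \cdot \im x'\}_{\theta \in \bbR}$, whose $E'_\bbR$-supremum is $|x'|_{E'}$, this identifies $P|x'|_{E'}$ as the complex modulus of $x'$ in $F$ and closes the argument.
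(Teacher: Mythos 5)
Your proof is correct, and it takes a genuinely different (though related) route from the paper's. The paper works element-by-element: for $f \in F_\bbR$ it observes $T'|f| \ge |f|$ (since $T'f = f$ and $T' \ge 0$), iterates to get the increasing bounded sequence $(T')^n|f|$, and extracts its weak${}^*$ limit $g$, which is then shown to be $\sup_{F_\bbR}\{f,-f\}$. You instead build a \emph{global} positive bounded projection $P$ from $E'$ onto $F$ via a Banach limit and shift-invariance, then transfer the lattice structure of $E'_\bbR$ to $F_\bbR$ in one uniform step via the observation that $P$ carries $E'_\bbR$-suprema of subsets of $F_\bbR$ to $F_\bbR$-suprema. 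The two constructions actually agree where it matters: for $f \in F_\bbR$ the scalar sequences $\langle (T')^n|f|, x\rangle$ converge, so your $P|f|$ coincides with the paper's $g$; you have merely extended this fixed-space averaging operator to all of $E'$. What your approach buys is modularity — positivity, the norm sandwich $\|x'\|_{E'} \le \|x'\|_F \le M\|x'\|_{E'}$, and the identification of the complex $F$-modulus with $P|\cdot|_{E'}$ all drop out of the single statement that $P$ is a positive contraction-up-to-$M$ projection fixing $F$ — and it makes explicit the "projection onto the fixed space" that is only implicit in the paper. What it costs is the Banach limit machinery (Hahn–Banach), whereas the paper only needs elementary weak${}^*$ monotone convergence in a dual space. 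You are also more explicit than the paper about the final step of checking that $\|\cdot\|_F$ really is the lattice norm attached to the $F$-complexification modulus, which the paper dismisses as "straightforward."
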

\begin{proof}
	By definition, $E$ is the complexification of a real Banach lattice $E_\bbR$; the space $E'$ is the complexification of the dual Banach lattice $E_\bbR'$. We define $F_\bbR := F \cap E_\bbR'$; since $T'$ maps $E_\bbR'$ to $E_\bbR'$ we clearly have $F_\bbR + iF_\bbR = F$. In order to prove that $F$ is a lattice subspace of $E'$ we thus have to show that $F_\bbR$ is a vector lattice with respect to the order induced by $E_\bbR'$. To this end, it suffices to proves that, for every $f \in F_\bbR'$, there exists a supremum of $f$ and $-f$ in $F_\bbR$.
	
	So let $f \in F_\bbR$. We have $T'f = f$ and thus $T'|f| \ge |f|$. Iterating this inequality we obtain that the sequence $((T')^n|f|)_{n \in \bbN_0}$ is increasing. Note that the sequence is also norm bounded since we assumed $T$ to be power bounded. Hence, $((T')^n|f|)_{n \in \bbN_0}$ converges to a vector $0 \le g \in E_\bbR'$ with respect to the weak${}^*$-topology. Since $T'$ is continuous with respect to this topology, $g$ is a fixed point of $T'$ and thus contained in $F_\bbR$. Since $g \ge |f|$, the vector $g$ is clearly an upper bound of $f$ and $-f$ in $F_\bbR$. Assume now, on the other hand, that $h \in F_\bbR$ is another upper bound of $f$ and $-f$ in $F_\bbR$. Then we have $|f| \le h$ and hence $(T')^n|f| \le (T')^n h = h$ for all $n \in \bbN_0$. This proves that $g \le h$, so $g$ is indeed the supremum of $f$ and $-f$ in $F_\bbR$. We have thus proved that $F$ is indeed a lattice subspace of $E'$.
	
	Finally, we denote the modulus of any $f \in F_\bbR$ in the vector lattice $F_\bbR$ by $|f|_F$ and we define $\|f\|_F := \| |f|_F\|$ for every $f \in F_\bbR$. We clearly have $\|f\| \le \|f\|_F$ for all $f \in F_\bbR$ and from the construction of $|f|_F$ in the above part of the proof we obtain that $\|f\|_F \le \sup_{n \in \bbN_0} \|T^n\| \|f\|$. Hence, the norm $\|\cdot\|_F$ on $F_\bbR$ is equivalent to the norm $\|\cdot\|$ induced by $E'$. It is now straightforward to check that $(F_\bbR, \|\cdot\|_F)$ is a (real) Banach lattice, and from this it readily follows that $F = F_\bbR + iF_\bbR$ is a complex Banach lattice with respect to a norm equivalent to the norm induced by $E'$.
\end{proof}

Arguments as used in the above proof are quite common in Perron--Frobenius theory, compare for instance \cite[the proof of Corollary~C-III.4.3(a)]{Arendt1986}. We also refer to \cite[Theorem~2.1 and Corollary~2.2]{Glueck2016} for related results.

The following proof of Theorem~\ref{thm:unif-asympt-pos-adjoint-with-bdd-error-cyclcic-per-spec} exhibits some similarities to the proof of \cite[Theorem~3.2]{Glueck2016}; the technical details are, however, quite different.

\begin{proof}[Proof of Theorem~\ref{thm:unif-asympt-pos-adjoint-with-bdd-error-cyclcic-per-spec}]
	We may assume that $\spr(T) = 1$. Let $\lambda \in \spec_\per(T)$, i.e.\ let $\lambda \in \spec(T)$ and $|\lambda| = 1$. We have to prove that $\lambda^m \in \spec(T)$ for all $m \in \bbZ$. Replacing $E$ with an ultra power and $T$ with its lifting to this ultra power we may assume that $\lambda$ is an eigenvalue of $T$ with an eigenvector $z \in E$ (note that the uniform asymptotic positivity of $T$ is conserved if we lift $T$ to an ultra power of $E$; this follows from formula~\eqref{eq:distance-to-positive-cone-in-ultra-power}).
	
	Let us now employ a second ultra power argument. Choose a free ultra filter $\calU$ on $\bbN$ and a sequence of integers $1 \le k_n \to \infty$ such that $\lambda^{k_n} \to 1$ (such a sequence clearly exists). We define two operators $R$ and $S$ on the ultra power $E^\calU$ which are given by $Rx^\calU = (T^{k_n-1}x_n)^\calU$ and $Sx^\calU = (T^{k_n}x_n)^{\calU}$ for all $x^\calU \in E^\calU$. Using that $T$ is power bounded it is easy to see that $R$ and $S$ are well-defined. Moreover, the operators $T^\calU$, $R$ and $S$ commute, they are power bounded and we have $R T^\calU = T^\calU R = S$. Since $T$ is uniformly asymptotically positive, it follows from formula~\eqref{eq:distance-to-positive-cone-in-ultra-power} that $R$, $S$ and $S T^\calU = T^\calU S$ are positive operators on the complex Banach lattice $E^\calU$. Note that the vector $z^\calU$ is contained in the fixed space $\ker(1-S)$ of $S$ since $\lambda^{k_n} \to 1$.
	
	We now take bi-adjoints; to keep the notation simple, let us define $\hat E := (E^\calU)''$, $\hat T := (T^\calU)''$, $\hat R := R''$ and $\hat S := S''$. Then the operators $\hat T$, $\hat R$ and $\hat S$ commute, they are power bounded and we have $\hat R \hat T = \hat T \hat R = \hat S$; moreover, $\hat R$, $\hat S$ and $\hat S \hat T = \hat T \hat S$ are positive. Proposition~\ref{prop:fixed-space-of-dual-operator} shows that the fixed space $F := \ker(1-\hat S)$ of $\hat S$ is a lattice subspace of $\hat E$ and a complex Banach lattice with respect to an equivalent norm. Since $\hat T$ and $\hat R$ commute with $\hat S$, they leave $F$ invariant, and their restrictions to $F$ fulfil $\hat R|_F \hat T|_F = \hat T|_F \hat R_F = \hat S|_F = \id_F$; hence, $\hat T|_F$ and $\hat R|_F$ are inverse to each other. Since $\hat R$ is positive, so is $\hat R|_F$, and since $\hat T \hat S$ is positive, so is $\hat T|_F$. This proves that $\hat T_F$ is actually a Banach lattice isomorphism on the complex Banach lattice $F$. 
	
	We consider $E^\calU$ as a subspace of $\hat E$ by means of evaluation. Since the eigenvector $z^\calU$ of $T^\calU$ for the eigenvalue $\lambda$ is contained in $\ker(1-S)$, it is also contained in $F = \ker(1-\hat S)$; so $\lambda$ is an eigenvalue of $\hat T|_F$. Since the point spectrum of every lattice homomorphism on a complex Banach lattice is cyclic \cite[Corollary~2 to Proposition~V.4.2]{Schaefer1974}, it follows that $\lambda^m$ is an eigenvalue of $\hat T|_F$, and thus of $\hat T$, for every $m \in \bbZ$. Therefore, $\lambda^m$ is a spectral value of $T^\calU$, and hence of $T$, for every $m \in \bbZ$.
\end{proof}

We close this section with a few comments on the assumptions of Theorem~\ref{thm:unif-asympt-pos-adjoint-with-bdd-error-cyclcic-per-spec}:

\begin{remarks}
	(a) The assertion of Theorem~\ref{thm:unif-asympt-pos-adjoint-with-bdd-error-cyclcic-per-spec} does not in general remain true if $T$ is only assumed to be individually asymptotically positive instead of uniformly asymptotically positive. A counterexample is again provided by the multiplication operator on $\ell^p(\bbN;\bbC)$ ($1 \le p < \infty$) with symbol $(-1+\frac{1}{n})_{n \in \bbN}$.
	
	(b) It does not seem to be clear whether the peripheral spectrum of an \emph{individually eventually} positive operator $T$ is cyclic (in case that $T/\spr(T)$ is power bounded). One might conjecture that every individually eventually positive operator $T$ is uniformly asymptotically positive (at least if $T/\spr(T)$ is power bounded), in which case the answer to this question would be positive due to Theorem~\ref{thm:unif-asympt-pos-adjoint-with-bdd-error-cyclcic-per-spec}; yet, it does not seem to be clear either whether such a conjecture is justified (compare the discussion before Theorem~\ref{thm:countable-summability-condition-implies-spec-in-spec}). 
\end{remarks}

\section{The Peripheral Point Spectrum} \label{section:the-periphera-point-spectrum}

In this final section we consider the peripheral \emph{point} spectrum rather than the peripheral spectrum. Recall that the peripheral point spectrum $\spec_{\per,\pnt}(T)$ of an operator $T$ is defined to be the set of all eigenvalues of $T$ with modulus $\spr(T)$. We point out that the peripheral point spectrum can be empty, in general.

While the peripheral spectrum of a positive operator $T$ is always cyclic in case that $T/\spr(T)$ is power bounded, this is not in general true for the peripheral point spectrum; see for instance \cite[Sections~5 and~6]{Glueck2016} where several counterexamples are discussed. On the other hand, the same reference contains many sufficient conditions which ensure that the peripheral point spectrum of a positive operator is indeed cyclic. Here, we adapt one of these conditions (namely \cite[Theorem~5.5]{Glueck2016}) to the case of weakly asymptotically positive operators. 

Recall that an operator $S$ on a Banach space $E$ is said to have \emph{relatively weakly compact orbits} if the set $\{S^n: \; n \in \bbN_0\}$ is relatively compact in $\calL(E)$ with respect to the weak operator topology. This is equivalent to the set $\{S^nx: \; n \in \bbN_0\}$ being relatively compact in $E$ with respect to the weak topology for each $x \in E$ (see e.g.\ \cite[Corollary~A.5]{Engel2000}). Note that every power bounded operator on a reflexive Banach space automatically has relatively weakly compact orbits.

\begin{theorem} \label{thm:criterion-for-cyclic-per-point-spec}
	Let $E$ be a complex Banach lattice and let $T \in \calL(E)$ with $\spr(T) > 0$. Suppose that the powers of $T/\spr(T)$ are relatively weakly compact and that $T$ is weakly asymptotically positive. Then we have
	\begin{align*}
		\dim \ker (\spr(T) e^{i\theta} - T) \le \dim \ker (\spr(T) e^{in \theta} - T)
	\end{align*}
	for all $n \in \bbZ$ and all $\theta \in \bbR$. In particular, the peripheral point spectrum of $T$ is cyclic.
\end{theorem}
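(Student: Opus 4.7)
The approach parallels the proof of Theorem~\ref{thm:unif-asympt-pos-adjoint-with-bdd-error-cyclcic-per-spec} but replaces the ultra power argument with a Jacobs--de Leeuw--Glicksberg (JdLG) decomposition, which is better suited to the weakly compact orbit hypothesis and lets us work with only weak asymptotic positivity. After normalising $\spr(T) = 1$ and writing $V_\phi := \ker(e^{i\phi} - T)$ for $\phi \in \bbR$, the goal is to show $\dim V_\theta \le \dim V_{n\theta}$.

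Since the orbit of every vector is weakly relatively compact, the set $\{T^k : k \in \bbN_0\}$ is relatively compact in $\calL(E)$ with respect to the weak operator topology, and its WOT-closure $\calS$ is a compact abelian semitopological semigroup. By the JdLG theorem the minimal ideal $K$ of $\calS$ is a compact topological group whose identity $P$ is a bounded projection on $E$. Every $U \in K$ is a WOT-limit of a net $(T^{n_\alpha})$ with $n_\alpha \to \infty$, so weak asymptotic positivity forces
\begin{align*}
	\langle y, Ux \rangle = \lim_\alpha \langle y, T^{n_\alpha} x \rangle \ge 0 \qquad \text{for all } x \in E_+,\ y \in E'_+,
\end{align*}
and hence every element of $K$ --- including $P$ itself and the group inverse $R := (TP)^{-1}$ --- is a positive operator on $E$.

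A short computation using $P^2 = P$ together with $|c| = 1$ shows that $V_\phi \subseteq PE$ for every $\phi$: if $Tz = e^{i\phi} z$, then $Pz = cz$ with $c = \lim_\alpha e^{i n_\alpha \phi}$, and the identity $P^2 z = Pz$ forces $c^2 = c$, so $c = 1$. Consequently $V_\phi = \ker(e^{i\phi} - T|_{PE})$ for every $\phi$, and $T|_{PE}$ is invertible in $\calL(PE)$ with inverse $R|_{PE}$. Since $P$ is a positive projection on $E$, the range $PE$ inherits a complex Banach lattice structure via the $P$-projected operations $x \vee_{PE} y := P(x \vee_E y)$, modulus $|x|_{PE} := P|x|_E$, and norm $\|x\|_{PE} := \||x|_{PE}\|$, which is equivalent to the ambient norm (up to the factor $\|P\|$). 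In this lattice structure both $T|_{PE}$ and $R|_{PE}$ are positive; being mutually inverse, $T|_{PE}$ is therefore a lattice isomorphism of $(PE, \|\cdot\|_{PE})$.

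The theorem then follows from the classical dimension inequality for the peripheral point spectrum of a lattice isomorphism $L$ on a complex Banach lattice: $\dim\ker(\mu - L) \le \dim\ker(\mu^n - L)$ for every $n \in \bbZ$ and every unimodular eigenvalue $\mu$. One proves this by choosing finitely many linearly independent eigenvectors for $\mu$, passing to the principal ideal generated by the sum of their moduli, identifying it via Kakutani's theorem with a $\Cont(K_0; \bbC)$-space in which $L$ becomes a Koopman operator $f \mapsto f \circ \varphi$, and checking orbit-by-orbit that every $\varphi$-orbit on which the equation $f \circ \varphi = \mu f$ admits a non-trivial continuous solution also supports the corresponding equation with $\mu^n$ in place of $\mu$. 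Applied to $L = T|_{PE}$ and $\mu = e^{i\theta}$, this yields the desired inequality. The principal technical obstacle is the construction and verification of the Banach lattice structure on the range of the positive projection $P$ in which $T|_{PE}$ becomes a lattice isomorphism; once this is in place the rest follows from the JdLG machinery together with standard structure theory for lattice isomorphisms.
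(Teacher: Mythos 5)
Your proposal follows essentially the same strategy as the paper: after normalising $\spr(T)=1$, pass to the WOT-closure $\calS$ of $\{T^n\}$, take the Sushkevich kernel $\calK$ with idempotent $P$, use weak asymptotic positivity to show every element of $\calK$ is positive, deduce that $PE$ is a lattice subspace on which $T|_{PE}$ is a lattice isomorphism, and then invoke a dimension inequality for lattice isomorphisms. The intermediate steps --- that every $U\in\calK$ is a WOT-cluster point of $(T^n)$ at infinity (which is what the paper proves via $\calK\subseteq\bigcap_m\overline{\{T^{m+n}:n\in\bbN_0\}}^{\mathrm w}$), that eigenvectors for unimodular eigenvalues lie in $PE$, and that $P$-projected lattice operations make $PE$ a Banach lattice --- are all correctly identified, even if stated a bit tersely.

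The one genuine gap is in the last step. You state (correctly) that the theorem reduces to the inequality $\dim\ker(\mu-L)\le\dim\ker(\mu^n-L)$ for a lattice isomorphism $L$ and unimodular $\mu$, which is exactly what the paper cites from \cite[Proposition~3.1(b)]{Glueck2016}. But the sketch you supply does not actually prove that inequality. Working in the Kakutani representation with $L=C_\varphi$ a Koopman operator is fine (after passing to the principal ideal of the $L$-fixed vector $u:=\sum_j|f_j|$, which is indeed $L$-invariant), and the classical ``signum'' construction $f\mapsto f\cdot(f/|f|)^{n-1}$ does send each eigenfunction of $\mu$ to one of $\mu^n$. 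However, this map is \emph{nonlinear}, so it does not transport a $k$-dimensional eigenspace to $k$ linearly independent eigenvectors; your ``orbit-by-orbit'' phrasing at best establishes cyclicity of the point spectrum, not the dimension comparison, and in any case continuous eigenfunctions are not determined orbit-by-orbit. Replacing the sketch by a citation of \cite[Proposition~3.1(b)]{Glueck2016} (or supplying the actual argument from that reference) closes the gap; everything upstream of that point is sound and matches the paper's proof.
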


In the above theorem we understand the dimension of a vector space to be either an integer or $\infty$, i.e.\ we do not distinguish between different infinite cardinalities. The proof of the above theorem relies on the so-called \emph{Jacobs--de Leeuw--Glicksberg decomposition} of an operator (see for instance \cite[Section~2.4]{Krengel1985}, \cite[Section~V.2]{Engel2000} or \cite[Section~16.3]{Eisner2015} for details about this construction) and is very similar to the proof of \cite[Theorem~5.5]{Glueck2016}. The only difference is that we now use a bit more information about the Jacobs--de Leeuw--Glicksberg decomposition to compensate for the fact that the operator might no longer be positive, but only weakly asymptotically positive.

\begin{proof}[Proof of Theorem~\ref{thm:criterion-for-cyclic-per-point-spec}]
	We may assume that $\spr(T) = 1$. Let $\calS$ denote the closure of the set $\{T^n: \; n \in \bbN_0\}$ in $\calL(E)$ with respect to the weak operator topology. Then $\calS$ is a compact commutative semi-topological semigroup with respect to operator multiplication and with respect to the weak operator topology. Hence, the so-called \emph{Sushkevich kernel}
	\begin{align*}
		\calK := \bigcap_{S \in \calS} S \calS
	\end{align*}
	is an ideal in the semigroup $\calS$ and in fact it is even a group. The neutral element $P$ of the group $\calK$ is a projection on $E$; since $P$ commutes with every operator $S \in \calS$ it reduces each such $S$. The restriction of $T$ to the range $PE$ of $P$ is an invertible operator in $\calL(PE)$ and its inverse $(T|_{PE})^{-1}$ is given by $R|_{PE}$ for some operator $R \in \calS$. Moreover, the range of $P$ is the closed linear span of all eigenvectors of $T$ belonging to eigenvalues of modulus $1$; in particular, we have $\ker(e^{i\theta}- T) = \ker(e^{i\theta} - T|_{PE})$ for all $\theta \in \bbR$. All these results can, for instance, be found in \cite[Section~2.4]{Krengel1985}. 
	
	Now we prove that each $K \in \calK$ is a positive operator on $E$. To this end, we first note that we have $S\calS = \overline{S \{T^n: \; n \in \bbN_0\} }^{\operatorname{w}}$ for each $S \in \calS$ (where $\overline{\calA}^{\operatorname{w}}$ denotes the closure of any subset $\calA \subseteq \calL(E)$ with respect to the weak operator topology). Indeed, the inclusion ``$\subseteq$'' is obvious and the converse inclusion ``$\supseteq$'' follows from the weak compactness of $\calS$. Hence, we obtain
	\begin{align}
		\label{eq:sushkevich-kernel-asymptotic-behaviour}
		\calK = \bigcap_{S \in \calS} \overline{S \{T^n: \; n \in \bbN_0\} }^{\operatorname{w}} \subseteq \bigcap_{m \in \bbN_0} \overline{ \{T^{m+n}: \; n \in \bbN_0\} }^{\operatorname{w}}.
	\end{align}
	Let $K \in \calK$, let $x \in E_+$, $x' \in E'_+$ and let $\varepsilon > 0$. Since $T$ is weakly asymptotically positive, there exists an $m \in \bbN_0$ such that $\distPos(\langle x', T^{m+n}x \rangle) < \varepsilon$ for all $n \in \bbN_0$. Moreover, according to~\eqref{eq:sushkevich-kernel-asymptotic-behaviour}, we can find an integer $n \in \bbN_0$ such that
	\begin{align*}
		|\langle x', T^{m+n}x\rangle - \langle x', Kx\rangle| < \varepsilon.
	\end{align*}
	Hence, $\distPos(\langle x', Kx\rangle) < 2\varepsilon$. Since $\varepsilon > 0$ was arbitrary, it follows that $K$ is indeed positive.
	
	This implies in particular that the projection $P$ is positive, so its range is a lattice subspace of $E$ and a complex Banach lattice with respect to some equivalent norm (this is a simple consequence of the same result for positive projections on \emph{real} Banach lattices which can for instance be found in \cite[Proposition~III.11.5]{Schaefer1974}). We have $T|_{PE} = (TP)|_{PE}$; since the operator $TP$ is contained in $\calK$, it is positive and hence, so is $T|_{PE}$. Finally, recall that the inverse $(T|_{PE})^{-1}$ is given by $R|_{PE}$ for some $R \in \calK$, so it is also positive. This proves that the restricted operator $T|_{PE}$ is a lattice isomorphism on the complex Banach lattice $PE$. We now conclude for all $n \in \bbZ$ and all $\theta \in \bbR$ that
	\begin{align*}
		\dim \ker (e^{i\theta} - T) & = \dim \ker (e^{i\theta} - T|_{PE}) \\
		& \le \dim \ker(e^{in\theta} - T|_{PE}) = \dim \ker(e^{in\theta} - T);
	\end{align*}
	the dimension estimate between the first and the second line is true for every lattice homomorphism as proved in \cite[Proposition~3.1(b)]{Glueck2016}.
\end{proof}

\subsection*{Acknowledgements}

While most of the work on this article was done, the author was supported by a scholarship of the ``Landesgraduiertenf\"orderung Baden--W\"urttemberg'', Germany (grant number 1301 LGFG-E).

\bibliographystyle{plain}
\bibliography{literature}

\begin{thebibliography}{10}

\bibitem{Altafini2016}
Claudio Altafini.
\newblock Minimal eventually positive realizations of externally positive
  systems.
\newblock {\em Automatica J. IFAC}, 68:140--147, 2016.

\bibitem{Altafini2015}
Claudio Altafini and Gabriele Lini.
\newblock Predictable dynamics of opinion forming for networks with
  antagonistic interactions.
\newblock {\em IEEE Trans. Automat. Control}, 60(2):342--357, 2015.

\bibitem{Arendt1986}
W.~Arendt, A.~Grabosch, G.~Greiner, U.~Groh, H.~P. Lotz, U.~Moustakas,
  R.~Nagel, F.~Neubrander, and U.~Schlotterbeck.
\newblock {\em One-parameter semigroups of positive operators}, volume 1184 of
  {\em Lecture Notes in Mathematics}.
\newblock Springer-Verlag, Berlin, 1986.

\bibitem{Arveson2002}
William Arveson.
\newblock {\em A short course on spectral theory}, volume 209 of {\em Graduate
  Texts in Mathematics}.
\newblock Springer-Verlag, New York, 2002.

\bibitem{Berman2009}
Abraham Berman, Minerva Catral, Luz~M. DeAlba, Abed Elhashash, Frank~J. Hall,
  Leslie Hogben, In-Jae Kim, D.~D. Olesky, Pablo Tarazaga, Michael~J.
  Tsatsomeros, and P.~van~den Driessche.
\newblock Sign patterns that allow eventual positivity.
\newblock {\em Electron. J. Linear Algebra}, 19:108--120, 2009.

\bibitem{Brauer1961}
Alfred Brauer.
\newblock On the characteristic roots of power-positive matrices.
\newblock {\em Duke Math. J.}, 28:439--445, 1961.

\bibitem{CarnochanNaqvi2002}
Sarah Carnochan~Naqvi and Judith~J. McDonald.
\newblock The combinatorial structure of eventually nonnegative matrices.
\newblock {\em Electron. J. Linear Algebra}, 9:255--269, 2002.

\bibitem{CarnochanNaqvi2004}
Sarah Carnochan~Naqvi and Judith~J. McDonald.
\newblock Eventually nonnegative matrices are similar to seminonnegative
  matrices.
\newblock {\em Linear Algebra Appl.}, 381:245--258, 2004.

\bibitem{Catral2012}
M.~Catral, C.~Erickson, L.~Hogben, D.~D. Olesky, and P.~van~den Driessche.
\newblock Sign patterns that allow strong eventual nonnegativity.
\newblock {\em Electron. J. Linear Algebra}, 23:1--10, 2012.

\bibitem{Daners2014}
Daniel Daners.
\newblock Non-positivity of the semigroup generated by the
  {D}irichlet-to-{N}eumann operator.
\newblock {\em Positivity}, 18(2):235--256, 2014.

\bibitem{Daners2016a}
Daniel Daners, Jochen Gl{\"u}ck, and James~B. Kennedy.
\newblock Eventually and asymptotically positive semigroups on {B}anach
  lattices.
\newblock {\em J. Differential Equations}, 261(5):2607--2649, 2016.

\bibitem{Daners2016}
Daniel Daners, Jochen Gl{\"u}ck, and James~B. Kennedy.
\newblock Eventually positive semigroups of linear operators.
\newblock {\em J. Math. Anal. Appl.}, 433(2):1561--1593, 2016.

\bibitem{Derriennic1973}
Yves Derriennic and Michael Lin.
\newblock On invariant measures and ergodic theorems for positive operators.
\newblock {\em J. Functional Analysis}, 13:252--267, 1973.

\bibitem{Eisner2015}
Tanja Eisner, B{\'a}lint Farkas, Markus Haase, and Rainer Nagel.
\newblock {\em Operator theoretic aspects of ergodic theory}, volume 272 of
  {\em Graduate Texts in Mathematics}.
\newblock Springer, Cham, 2015.

\bibitem{Elhashash2008}
Abed Elhashash and Daniel~B. Szyld.
\newblock On general matrices having the {P}erron-{F}robenius property.
\newblock {\em Electron. J. Linear Algebra}, 17:389--413, 2008.

\bibitem{Elhashash2009}
Abed Elhashash and Daniel~B. Szyld.
\newblock Two characterizations of matrices with the {P}erron-{F}robenius
  property.
\newblock {\em Numer. Linear Algebra Appl.}, 16(11-12):863--869, 2009.

\bibitem{Ellison2009}
Elisabeth~M. Ellison, Leslie Hogben, and Michael~J. Tsatsomeros.
\newblock Sign patterns that require eventual positivity or require eventual
  nonnegativity.
\newblock {\em Electron. J. Linear Algebra}, 19:98--107, 2009.

\bibitem{Engel2000}
K.-J. Engel and R.~Nagel.
\newblock {\em One-parameter semigroups for linear evolution equations}, volume
  194 of {\em Graduate Texts in Mathematics}.
\newblock Springer-Verlag, New York, 2000.
\newblock With contributions by S. Brendle, M. Campiti, T. Hahn, G. Metafune,
  G. Nickel, D. Pallara, C. Perazzoli, A. Rhandi, S. Romanelli and R.
  Schnaubelt.

\bibitem{Erickson2015}
Craig Erickson.
\newblock Sign patterns that require eventual exponential nonnegativity.
\newblock {\em Electron. J. Linear Algebra}, 30:171--195, 2015.

\bibitem{Ferrero2008}
Alberto Ferrero, Filippo Gazzola, and Hans-Christoph Grunau.
\newblock Decay and eventual local positivity for biharmonic parabolic
  equations.
\newblock {\em Discrete Contin. Dyn. Syst.}, 21(4):1129--1157, 2008.

\bibitem{Friedland1978}
Shmuel Friedland.
\newblock On an inverse problem for nonnegative and eventually nonnegative
  matrices.
\newblock {\em Israel J. Math.}, 29(1):43--60, 1978.

\bibitem{Frobenius1908}
Georg Frobenius.
\newblock {\"U}ber {M}atrizen aus positiven {E}lementen.
\newblock {\em Sitzungsberichte der K\"oniglich Preussischen Akademie der
  Wissenschaften zu Berlin}, pages 471--476, 1908.

\bibitem{Frobenius1909}
Georg Frobenius.
\newblock {\"U}ber {M}atrizen aus positiven {E}lementen. {I}{I}.
\newblock {\em Sitzungsberichte der K\"oniglich Preussischen Akademie der
  Wissenschaften zu Berlin}, pages 514--518, 1909.

\bibitem{Frobenius1912}
Georg Frobenius.
\newblock {\"Uber Matrizen aus nicht negativen Elementen.}
\newblock {\em Sitzungsberichte der K\"oniglich Preussischen Akademie der
  Wissenschaften zu Berlin}, pages 456--477, 1912.

\bibitem{Gazzola2008}
Filippo Gazzola and Hans-Christoph Grunau.
\newblock Eventual local positivity for a biharmonic heat equation in {$\mathbb
  R^n$}.
\newblock {\em Discrete Contin. Dyn. Syst. Ser. S}, 1(1):83--87, 2008.

\bibitem{Glueck2015}
Jochen Gl{\"u}ck.
\newblock On weak decay rates and uniform stability of bounded linear
  operators.
\newblock {\em Arch. Math. (Basel)}, 104(4):347--356, 2015.

\bibitem{GlueckGR}
Jochen Gl{\"u}ck.
\newblock Growth rates and the peripheral spectrum of positive operators.
\newblock 2016.
\newblock Accepted for publication in \emph{Houston Journal of Mathematics}.
  Preprint available from arxiv.org/abs/1512.07483.

\bibitem{Glueck2016}
Jochen Gl{\"u}ck.
\newblock On the peripheral spectrum of positive operators.
\newblock {\em Positivity}, 20(2):307--336, 2016.

\bibitem{Grobler1995}
J.~J. Grobler.
\newblock Spectral theory in {B}anach lattices.
\newblock In {\em Operator theory in function spaces and {B}anach lattices},
  volume~75 of {\em Oper. Theory Adv. Appl.}, pages 133--172. Birkh\"auser,
  Basel, 1995.

\bibitem{Heinrich1980}
S.~Heinrich.
\newblock Ultraproducts in {B}anach space theory.
\newblock {\em J. Reine Angew. Math.}, 313:72--104, 1980.

\bibitem{Hogben2009}
Leslie Hogben.
\newblock Eventually cyclic matrices and a test for strong eventual
  nonnegativity.
\newblock {\em Electron. J. Linear Algebra}, 19:129--140, 2009.

\bibitem{Hogben2015}
Leslie Hogben, Bit-Shun Tam, and Ulrica Wilson.
\newblock Note on the {J}ordan form of an irreducible eventually nonnegative
  matrix.
\newblock {\em Electron. J. Linear Algebra}, 30:279--285, 2015.

\bibitem{Johnson2004}
Charles~R. Johnson and Pablo Tarazaga.
\newblock On matrices with {P}erron-{F}robenius properties and some negative
  entries.
\newblock {\em Positivity}, 8(4):327--338, 2004.

\bibitem{Krein1948}
M.~G. Kre{\u\i}n and M.~A. Rutman.
\newblock Linear operators leaving invariant a cone in a {B}anach space.
\newblock {\em Uspehi Matem. Nauk (N. S.)}, 3(1(23)):3--95, 1948.

\bibitem{Krengel1985}
U.~Krengel.
\newblock {\em Ergodic theorems}, volume~6 of {\em de Gruyter Studies in
  Mathematics}.
\newblock Walter de Gruyter \& Co., Berlin, 1985.
\newblock With a supplement by Antoine Brunel.

\bibitem{Krieger1969}
Hans-J{\"u}rgen Krieger.
\newblock {\em Beitr\"age zur {T}heorie positiver {O}peratoren}.
\newblock Akademie-Verlag, Berlin, 1969.
\newblock Schriftenreihe der Institute f{\"u}r Mathematik. Reihe A: Reine
  Mathematik, Heft 6.

\bibitem{Lotz1968}
Heinrich~P. Lotz.
\newblock \"{U}ber das {S}pektrum positiver {O}peratoren.
\newblock {\em Math. Z.}, 108:15--32, 1968.

\bibitem{MacCluer2000}
C.~R. MacCluer.
\newblock The many proofs and applications of {P}erron's theorem.
\newblock {\em SIAM Rev.}, 42(3):487--498 (electronic), 2000.

\bibitem{Meyer-Nieberg1991}
P.~Meyer-Nieberg.
\newblock {\em Banach lattices}.
\newblock Universitext. Springer-Verlag, Berlin, 1991.

\bibitem{Noutsos2006}
Dimitrios Noutsos.
\newblock On {P}erron-{F}robenius property of matrices having some negative
  entries.
\newblock {\em Linear Algebra Appl.}, 412(2-3):132--153, 2006.

\bibitem{Noutsos2008}
Dimitrios Noutsos and Michael~J. Tsatsomeros.
\newblock Reachability and holdability of nonnegative states.
\newblock {\em SIAM J. Matrix Anal. Appl.}, 30(2):700--712, 2008.

\bibitem{Noutsos2012}
Dimitrios Noutsos and Richard~S. Varga.
\newblock On the {P}erron-{F}robenius theory for complex matrices.
\newblock {\em Linear Algebra Appl.}, 437(4):1071--1088, 2012.

\bibitem{Olesky2009}
D.~D. Olesky, M.~J. Tsatsomeros, and P.~van~den Driessche.
\newblock {$M_{\vee}$}-matrices: a generalization of {$M$}-matrices based on
  eventually nonnegative matrices.
\newblock {\em Electron. J. Linear Algebra}, 18:339--351, 2009.

\bibitem{Perron1907a}
Oskar Perron.
\newblock Grundlagen f{\"u}r eine {T}heorie des {J}acobischen
  {K}ettenbruchalgorithmus.
\newblock {\em Mathematische Annalen}, 64(1):1--76, 1907.

\bibitem{Perron1907}
Oskar Perron.
\newblock Zur {T}heorie der {M}atrices.
\newblock {\em Math. Ann.}, 64(2):248--263, 1907.

\bibitem{Rump2003}
Siegfried~M. Rump.
\newblock Perron-{F}robenius theory for complex matrices.
\newblock {\em Linear Algebra Appl.}, 363:251--273, 2003.
\newblock Special issue on nonnegative matrices, $M$-matrices and their
  generalizations (Oberwolfach, 2000).

\bibitem{Saha2015}
Manideepa Saha and Sriparna Bandhopadhay.
\newblock Combinatorial properties of generalized {$M$}-matrices.
\newblock {\em Electron. J. Linear Algebra}, 30:550--576, 2015.

\bibitem{Schaefer1974}
H.~H. Schaefer.
\newblock {\em Banach lattices and positive operators}.
\newblock Springer-Verlag, New York, 1974.
\newblock Die Grundlehren der mathematischen Wissenschaften, Band 215.

\bibitem{Seneta1981}
E.~Seneta.
\newblock {\em Nonnegative matrices and {M}arkov chains}.
\newblock Springer Series in Statistics. Springer-Verlag, New York, second
  edition, 1981.

\bibitem{Sootla2016a}
Aivar Sootla.
\newblock Properties of eventually positive linear input-output systems.
\newblock Preprint. Available from arxiv.org/abs/1509.08392.

\bibitem{Sootla2015}
Aivar Sootla and Alexandre Mauroy.
\newblock Operator-theoretic characterization of eventually monotone systems.
\newblock Preprint. Available from http://arxiv.org/abs/1510.01149.

\bibitem{Sootla2016}
Aivar Sootla and Alexandre Mauroy.
\newblock On (eventually) monotone dynamical systems and positive koopman
  semigroups.
\newblock 2016.
\newblock 22nd International Symposium on Mathematical Theory of Networks and
  Systems.

\bibitem{Tarazaga2001}
Pablo Tarazaga, Marcos Raydan, and Ana Hurman.
\newblock Perron-{F}robenius theorem for matrices with some negative entries.
\newblock {\em Linear Algebra Appl.}, 328(1-3):57--68, 2001.

\bibitem{Taylor1958}
Angus~E. Taylor.
\newblock {\em Introduction to functional analysis}.
\newblock John Wiley \& Sons, Inc., New York; Chapman \& Hall, Ltd., London,
  1958.

\bibitem{Tudisco2015}
Francesco Tudisco, Valerio Cardinali, and Carmine Di~Fiore.
\newblock On complex power nonnegative matrices.
\newblock {\em Linear Algebra Appl.}, 471:449--468, 2015.

\bibitem{Neerven1995}
J.~M. A.~M. van Neerven.
\newblock Exponential stability of operators and operator semigroups.
\newblock {\em J. Funct. Anal.}, 130(2):293--309, 1995.

\bibitem{Weiss1989}
George Weiss.
\newblock Weakly {$l^p$}-stable linear operators are power stable.
\newblock {\em Internat. J. Systems Sci.}, 20(11):2323--2328, 1989.

\bibitem{Yosida1995}
K.~Yosida.
\newblock {\em Functional analysis}.
\newblock Classics in Mathematics. Springer-Verlag, Berlin, 1995.
\newblock Reprint of the sixth (1980) edition.

\bibitem{Zaslavsky1999}
Boris~G. Zaslavsky and Bit-Shun Tam.
\newblock On the {J}ordan form of an irreducible matrix with eventually
  non-negative powers.
\newblock {\em Linear Algebra Appl.}, 302/303:303--330, 1999.
\newblock Special issue dedicated to Hans Schneider (Madison, WI, 1998).

\end{thebibliography}

\end{document}